\newtheorem{prop}{Proposition}[section]
\newtheorem{thm}[prop]{Theorem}
\newtheorem{lemma}[prop]{Lemma}
\theoremstyle{remark}
\newtheorem{rmk}[prop]{Remark}
\theoremstyle{definition}
\numberwithin{equation}{section}
\renewcommand{\P}{\mathbb{P}}
\newcommand{\E}{\mathbb{E}}
\renewcommand{\L}{\mathbb{L}}
\newcommand{\erre}{\mathbb{R}}
\newcommand{\enne}{\mathbb{N}}
\newcommand{\m}{\bar{\mu}}
\newcommand{\ip}[2]{\langle#1,#2\rangle}
\DeclarePairedDelimiter\norm{\lVert}{\rVert}
\title{On maximal inequalities for purely discontinuous martingales
  in infinite dimensions}
\author{Carlo Marinelli\footnote{Department of Mathematics, University
    College London, Gower Street, London WC1E 6BT, United
    Kingdom. URL: \texttt{http://goo.gl/4GKJP}} \and Michael
  R\"ockner\footnote{Fakult\"at f\"ur Mathematik, Universit\"at
    Bielefeld, Germany.}}
\date{\normalsize 9 August 2013}
\begin{document}
\maketitle

\begin{abstract}
  The purpose of this paper is to give a survey of a class of maximal
  inequalities for purely discontinuous martingales, as well as for
  stochastic integral and convolutions with respect to Poisson
  measures, in infinite dimensional spaces. Such maximal inequalities
  are important in the study of stochastic partial differential
  equations with noise of jump type.
\medskip\par\noindent
%
%
\end{abstract}

\section{Introduction}
The purpose of this work is to collect several proofs, in part
revisited and extended, of a class of maximal inequalities for
stochastic integrals with respect to compensated random measures,
including Poissonian integrals as a special case. The precise
formulation of these inequalities can be found in Sections \ref{sec:H}
to \ref{sec:conv} below. Their main advantage over the maximal
inequalities of Burkholder, Davis and Gundy is that their right-hand
side is expressed in terms of predictable ``ingredients'', rather than
in terms of the quadratic variation.
Since our main motivation is the application to stochastic partial
differential equations (SPDE), in particular to questions of
existence, uniqueness, and regularity of solutions
(cf.~\cite{cm:MF10,cm:JFA13,cm:JFA10,cm:IDAQP10,cm:EJP10}), we focus
on processes in continuous time taking values in infinite-dimensional
spaces. Corresponding estimates for finite-dimensional processes have
been used in many areas, for instance in connection to Malliavin
calculus for processes with jumps, flow properties of solutions to
SDEs, and numerical schemes for L\'evy-driven SDEs (see
e.g. \cite{BGJ,JKMP,Kun:04}). Very recent extensions to vector-valued
settings have been used to develop the theory of stochastic
integration with jumps in (certain) Banach spaces (see \cite{Dirksen}
and references therein).

We have tried to reconstruct the historical developments around this
class of inequalities (an investigation which les us to quite a few
surprises), together with relevant references, and we hope that our
account could at least serve to correct some terminology that seems not
appropriate. In fact, while we refer to Section \ref{sec:storia} below
for details, it seems important to remark already at this stage that
the estimates which we termed ``Bichteler-Jacod's inequalities'' in
our previous article \cite{cm:JFA10} should have probably more
rightfully been baptized as ``Novikov's inequalities'', in recognition
of the contribution \cite{Nov:75}.

Let us conclude this introductory section with a brief outline of the
remaining content: after fixing some notation and collecting a few
elementary (but useful) results in Section \ref{sec:prel}, we state
and prove several upper and lower bounds for purely discontinuous
Hilbert-space-valued continuous-time martingales in Section
\ref{sec:H}. We actually present several proofs, adapting,
simplifying, and extending arguments of the existing literature. The
proofs in Subsections \ref{ssec:zappa} and \ref{ssec:khin} might be,
at least in part, new. On the issue of who proved what and when,
however, we refer to the (hopefully) comprehensive discussion in
Section \ref{sec:storia}. Section \ref{sec:lq} deals with $L_q$-valued
processes that can be written as stochastic integrals with respect to
compensated Poisson random measures. Unfortunately, to keep this
survey within a reasonable length, it has not been possible to
reproduce the proof, for which we refer to the original contribution
\cite{Dirksen}. The (partial) extension to the case of stochastic
convolutions is discussed in Section \ref{sec:conv}.


\section{Preliminaries}     \label{sec:prel}
Let $(\Omega,\mathcal{F},\mathbb{F}=(\mathcal{F}_t)_{t\geq 0},\P)$ be
a filtered probability space satisfying the ``usual'' conditions, on
which all random elements will be defined, and $H$ a real (separable)
Hilbert space with 
norm $\norm{\cdot}$.
If $\xi$ is an $E$-valued random variable, with $E$ a normed space,
and $p>0$, we shall use the notation
\[
\norm[\big]{\xi}_{\L_p(E)} := \bigl(\E\norm{\xi}_E^p\bigr)^{1/p}.
\]
Let $\mu$ be a random measure on a measurable space $(Z,\mathcal{Z})$,
with dual predictable projection (compensator) $\nu$. We shall use
throughout the paper the symbol $M$ to denote a martingale of the type
$M=g \star \m$, where $\m:=\mu-\nu$ and $g$ is a vector-valued
(predictable) integrand such that the stochastic integral
\[
(g \star \m)_t := \int_{(0,t]}\!\int_Z g(s,z)\,\m(ds,dz)
\]
is well defined. We shall deal only with the case that $g$ (hence $M$)
takes values in $H$ or in an $L_q$ space. Integrals with
respect to $\mu$, $\nu$ and $\m$ will often be written in abbreviated
form, e.g. $\int_0^t g\,d\m := (g \star \m)_t$ and $\int g\,d\m := (g
\star \m)_\infty$. If $M$ is $H$-valued, the following well-known
identities hold for the quadratic variation $[M,M]$ and the Meyer
process $\ip{M}{M}$:
\[
[M,M]_T = \sum_{s \leq T} \norm{\Delta M_s}^2 = \int_0^T \norm{g}^2\,d\mu, 
\qquad
\ip{M}{M}_T = \int_0^T \norm{g}^2\,d\nu
\]
for any stopping time $T$. Moreover, we shall need the fundamental
Burkholder-Davis-Gundy's (BDG) inequality:
\[
\norm[\big]{M^*_\infty}_{\L_p} \eqsim \norm[\big]{[M,M]^{1/2}_\infty}_{\L_p}
\qquad \forall p \in [{1,\infty}[,
\]
where $M^*_\infty:=\sup_{t \geq 0} \norm{M_t}$.
An expression of the type $a \lesssim b$ means that there exists a
(positive) constant $N$ such that $a \leq Nb$. If $N$ depends on the
parameters $p_1,\ldots,p_n$, we shall write $a
\lesssim_{p_1,\ldots,p_n} b$. Moreover, if $a \lesssim b$ and $b
\lesssim a$, we shall write $a \eqsim b$.

\medskip

The following lemma about (Fr\'echet) differentiability of powers of
the norm of a Hilbert space is elementary and its proof is omitted.
\begin{lemma}
  Let $\phi:H \to \erre$ be defined as $\phi: x \mapsto
  \norm{x}^p$, with $p>0$. Then $\phi \in C^\infty(H \setminus
  \{0\})$, with first and second Fr\'echet derivatives
  \begin{align}
  \label{eq:phi'}
  \phi'(x): \eta &\mapsto p \norm{x}^{p-2} \ip{x}{\eta},\\
  \label{eq:phi''}
  \phi''(x): (\eta,\zeta) &\mapsto
  p(p-2) \norm{x}^{p-4} \ip{x}{\eta}\ip{x}{\zeta} 
  + p \norm{x}^{p-2} \ip{\eta}{\zeta}.
  \end{align}
  In particular, $\phi \in C^1(H)$ if $p > 1$, and $\phi \in
  C^2(H)$ if $p > 2$.
\end{lemma}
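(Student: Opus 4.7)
The plan is to reduce everything to the chain rule together with the well-known smoothness of the squared norm. Write $\phi(x)=\psi(q(x))$ where $q:H\to\erre$, $q(x)=\norm{x}^2=\ip{x}{x}$, and $\psi:(0,\infty)\to\erre$, $\psi(t)=t^{p/2}$. The quadratic form $q$ is Fr\'echet $C^\infty$ on all of $H$, with $q'(x)\eta=2\ip{x}{\eta}$, $q''(x)(\eta,\zeta)=2\ip{\eta}{\zeta}$, and $q^{(k)}\equiv 0$ for $k\geq 3$. The scalar function $\psi$ is $C^\infty$ on $(0,\infty)$. Since $q(x)>0$ for $x\neq 0$, the composition $\phi=\psi\circ q$ is $C^\infty$ on $H\setminus\{0\}$.

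Next I would compute \eqref{eq:phi'} and \eqref{eq:phi''} by plugging into the chain rule. For the first derivative,
\[
\phi'(x)\eta=\psi'(q(x))\,q'(x)\eta=\tfrac{p}{2}\norm{x}^{p-2}\cdot 2\ip{x}{\eta}=p\norm{x}^{p-2}\ip{x}{\eta}.
\]
For the second derivative I differentiate once more, using the product rule on the map $x\mapsto p\norm{x}^{p-2}\ip{x}{\eta}$ seen as the product of the scalar $p\norm{x}^{p-2}$ and the linear form $\ip{x}{\eta}$; differentiating the scalar factor in direction $\zeta$ brings down $p(p-2)\norm{x}^{p-4}\ip{x}{\zeta}$, while differentiating the linear form gives $\ip{\eta}{\zeta}$. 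Collecting the two terms reproduces \eqref{eq:phi''}.

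It then remains to handle the behaviour at the origin under the stated conditions on $p$. For $p>1$, define $\phi'(0):=0$; from \eqref{eq:phi'} one has $\norm{\phi'(x)}_{H^*}\leq p\norm{x}^{p-1}\to 0$ as $x\to 0$, and also $\phi(x)-\phi(0)=\norm{x}^p=o(\norm{x})$, so $\phi'(0)=0$ is indeed the Fr\'echet derivative and $\phi'$ is continuous at $0$. For $p>2$, set $\phi''(0):=0$; from \eqref{eq:phi''} the operator norm of $\phi''(x)$ is bounded by $\bigl(p|p-2|+p\bigr)\norm{x}^{p-2}\to 0$, and a first-order Taylor expansion of $\phi'$ around $0$ shows that $0$ is the Fr\'echet second derivative. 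This gives $\phi\in C^1(H)$ when $p>1$ and $\phi\in C^2(H)$ when $p>2$.

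There is no real obstacle: the only mildly delicate point is the continuous extension of the derivatives to the origin, which however reduces to the elementary scalar estimates $\norm{x}^{p-1}\to 0$ and $\norm{x}^{p-2}\to 0$ valid under the respective hypotheses $p>1$ and $p>2$.
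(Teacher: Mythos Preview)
Your argument is correct and is exactly the natural route: factor $\phi=\psi\circ q$ with $q(x)=\norm{x}^2$ smooth on $H$ and $\psi(t)=t^{p/2}$ smooth on $(0,\infty)$, apply the chain rule to get \eqref{eq:phi'}--\eqref{eq:phi''}, and then extend continuously to the origin using the elementary bounds $\norm{\phi'(x)}\leq p\norm{x}^{p-1}$ and $\norm{\phi''(x)}\leq p(p-1)\norm{x}^{p-2}$. The paper itself omits the proof, calling the lemma elementary, so there is nothing to compare against; your write-up fills the gap cleanly.
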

It should be noted that, here and in the following, for $p\in
\left[1,2\right[$ and $p \in \left[2,4\right[$, the linear form
$\norm{x}^{p-2}\ip{x}{\cdot}$ and the bilinear form
$\norm{x}^{p-4}\ip{x}{\cdot}\ip{x}{\cdot}$, respectively,
have to be interpreted as the zero form if $x=0$.

\medskip

The estimate contained in the following lemma is simple but perhaps
not entirely trivial.
\begin{lemma}
  Let $1 \leq p \leq 2$. One has, for any $x, y \in H$,
  \begin{equation}     \label{eq:furba}
  0 \leq \norm{x+y}^p - \norm{x}^p - p\norm{x}^{p-2}\ip{x}{y} 
  \lesssim_p \norm{y}^p.
  \end{equation}
\end{lemma}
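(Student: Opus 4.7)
The plan is to prove the two bounds separately. The \emph{lower} bound is exactly the subgradient inequality for the convex function $\phi(x)=\norm{x}^p$ (convexity for $p\geq 1$ follows from the previous lemma and the convention at the origin, or directly from the triangle inequality composed with $t\mapsto t^p$): $\phi(x+y)\geq \phi(x)+\phi'(x)(y)$, which after unwrapping $\phi'$ via \eqref{eq:phi'} gives the left-hand inequality of \eqref{eq:furba}. The case $x=0$ is immediate from the convention that $\norm{x}^{p-2}\ip{x}{\cdot}$ vanishes there.

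For the \emph{upper} bound I would split into two regimes according to the size of $\norm{y}$ relative to $\norm{x}$, the threshold $\norm{y}\geq \tfrac12\norm{x}$ versus $\norm{y}<\tfrac12\norm{x}$ being a convenient choice. In the first regime (which also absorbs $x=0$) each of the three summands is estimated crudely: $\norm{x+y}^p\leq (\norm{x}+\norm{y})^p\leq 3^p\norm{y}^p$, $\norm{x}^p\leq 2^p\norm{y}^p$, and by Cauchy--Schwarz
\[
\bigl|p\norm{x}^{p-2}\ip{x}{y}\bigr|\leq p\norm{x}^{p-1}\norm{y}\leq p\,2^{p-1}\norm{y}^p,
\]
all of which combine to a constant multiple of $\norm{y}^p$.

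In the second regime $\norm{y}<\tfrac12\norm{x}$ the segment $\{x+ty:t\in[0,1]\}$ stays in the open set $H\setminus\{0\}$ (indeed $\norm{x+ty}\geq \norm{x}-t\norm{y}\geq \tfrac12\norm{x}$), so by the lemma $\phi\in C^2$ there and I can apply the second-order Taylor formula
\[
\phi(x+y)-\phi(x)-\phi'(x)(y)=\int_0^1 (1-t)\,\phi''(x+ty)(y,y)\,dt.
\]
Using \eqref{eq:phi''} together with Cauchy--Schwarz and $p(p-2)\leq 0$ yields
\[
0\leq \phi''(z)(y,y)\leq p\norm{z}^{p-2}\norm{y}^2,
\]
and since $p-2\leq 0$ and $\norm{x+ty}\geq \tfrac12\norm{x}$, we obtain $\norm{x+ty}^{p-2}\leq 2^{2-p}\norm{x}^{p-2}$. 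Finally, again using $p-2\leq 0$ and $\norm{x}\geq 2\norm{y}$, the factor $\norm{x}^{p-2}\norm{y}^2$ is controlled by $2^{p-2}\norm{y}^p$, producing a bound $\lesssim_p\norm{y}^p$.

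The one genuinely delicate point is that for $p\in[1,2)$ the second derivative $\phi''$ blows up near the origin, so the Taylor argument cannot be applied globally; this is exactly why the case split is needed, and arranging it so that the segment $[x,x+y]$ stays away from $0$ (for which the threshold $\tfrac12$ is convenient) is the main step to get right. Everything else is bookkeeping of $p$-dependent constants.
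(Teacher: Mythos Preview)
Your proof is correct and follows essentially the same route as the paper: convexity for the lower bound, and for the upper bound the same case split at the threshold $\norm{x}=2\norm{y}$, with crude termwise estimates in one regime and a second-order Taylor expansion along $t\mapsto\norm{x+ty}^p$ in the other. The only cosmetic differences are that the paper uses the Lagrange form of the remainder (with some $\theta\in\,]0,1[$) rather than the integral form, and bounds $\norm{x+\theta y}^{p-2}\leq\norm{y}^{p-2}$ directly via $\norm{x+\theta y}>\norm{y}$ instead of passing through $\norm{x}^{p-2}$.
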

\begin{proof}
  Let $x$, $y \in H$. We can clearly assume $x$, $y \neq 0$, otherwise
  \eqref{eq:furba} trivially holds. Since the function $\phi: x
  \mapsto \norm{x}^p$ is convex and Fr\'echet differentiable on $H
  \setminus \{0\}$ for all $p \geq 1$, one has
  \[
  \phi(x+y) - \phi(x) \geq \ip{\nabla\phi(x)}{y},
  \]
  hence, by \eqref{eq:phi'},
  \[
  \norm{x+y}^p - \norm{x}^p - p\norm{x}^{p-2}\ip{x}{y} \geq 0.
  \]
  To prove the upper bound we distinguish two cases: if $\norm{x} \leq
  2\norm{y}$, it is immediately seen that \eqref{eq:furba} is true; if
  $\norm{x} > 2\norm{y}$, Taylor's formula applied to the function
  $[0,1] \ni t \mapsto \norm{x + ty}^p$ implies
  \[
  \norm{x+y}^p - \norm{x}^p - p\norm{x}^{p-2}\ip{x}{y} 
  \lesssim_p \norm{x+\theta y}^{p-2} \norm{y}^2
  \]
  for some $\theta \in ]{0,1}[$ (in particular $x+\theta y \neq
  0$). Moreover, we have
  \[
  \norm{x + \theta y} \geq \norm{x}-\norm{y}
  > 2\norm{y} - \norm{y} = \norm{y},
  \]
  hence, since $p-2 \leq 0$, $\norm{x + \theta y}^{p-2} \leq
  \norm{y}^{p-2}$.
\end{proof}

For the purposes of the following lemma only, let $(X,\mathcal{A},m)$
be a measure space, and denote $L_p(X,\mathcal{A},m)$ simply by $L_p$.
\begin{lemma}     \label{lm:interp}
  Let $1 < q < p$. For any $\alpha \geq 0$, one has
  \[
  \norm{f}^\alpha_{L_q} \leq \norm{f}^\alpha_{L_2} + \norm{f}^\alpha_{L_p}
  \]
\end{lemma}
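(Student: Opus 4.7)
The plan is to interpolate $\norm{f}_{L_q}$ between $\norm{f}_{L_2}$ and $\norm{f}_{L_p}$ via Lyapunov's (log-convexity) inequality, and then to convert the resulting geometric mean into an arithmetic one by Young's inequality.

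Concretely, the first step is to pick $\theta \in [0,1]$ such that $\tfrac{1}{q} = \tfrac{\theta}{2} + \tfrac{1-\theta}{p}$; this choice is possible exactly when $q$ lies between $2$ and $p$ (the endpoint cases $q \in \{2,p\}$ being trivial). Writing $|f|^{q} = |f|^{\theta q} \cdot |f|^{(1-\theta)q}$ and applying H\"older with conjugate exponents $\tfrac{2}{\theta q}$ and $\tfrac{p}{(1-\theta)q}$ yields the classical Lyapunov bound
\[
\norm{f}_{L_q} \;\leq\; \norm{f}_{L_2}^{\theta}\, \norm{f}_{L_p}^{1-\theta}.
\]

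The second step is to raise both sides to the $\alpha$-th power and invoke the weighted AM--GM inequality $A^{\theta} B^{1-\theta} \leq \theta A + (1-\theta) B$, valid for $A, B \geq 0$ and $\theta \in [0,1]$, with $A = \norm{f}_{L_2}^{\alpha}$ and $B = \norm{f}_{L_p}^{\alpha}$. Bounding the resulting convex combination crudely by $A + B$ then delivers the claim.

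I do not anticipate any serious obstacle: the lemma is simply a packaged form of two very classical inequalities. The one point demanding attention is that $\theta$ must genuinely lie in $[0,1]$, so the triple of exponents $(2,q,p)$ has to be in the correct order; this is the sense in which the statement will be used in the sequel. A split-domain alternative (treating $\{|f| \leq 1\}$ and $\{|f| > 1\}$ separately, estimating $|f|^q$ by $|f|^2$ on the former and by $|f|^p$ on the latter) would likewise work in the same regime, but leads to the slightly weaker $\norm{f}_{L_q}^{q} \leq \norm{f}_{L_2}^{2} + \norm{f}_{L_p}^{p}$ and then requires extra work to match the arbitrary exponent $\alpha$, so I would favour the interpolation route.
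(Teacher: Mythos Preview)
Your proposal is correct and essentially identical to the paper's own proof: the paper also interpolates via H\"older to get $\norm{f}_{L_q} \leq \norm{f}_{L_2}^{r}\norm{f}_{L_p}^{1-r}$, raises to the power $\alpha$, and then applies Young's inequality with conjugate exponents $1/r$ and $1/(1-r)$, which is precisely your weighted AM--GM step. Your remark that one needs $q$ to lie between $2$ and $p$ for $\theta\in[0,1]$ is well taken and matches how the lemma is actually used in the paper.
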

\begin{proof}
  By a well-known consequence of H\"older's inequality one has
  \[
  \norm{f}_{L_q} \leq \norm{f}^r_{L_2} \, \norm{f}^{1-r}_{L_p},
  \]
  for some $0<r<1$. Raising this to the power $\alpha$ and applying
  Young's inequality with conjugate exponents $s:=1/r$ and $s':=
  1/(1-r)$ yields
  \[
  \norm{f}^\alpha_{L_q} \leq \norm{f}^{r\alpha}_{L_2} \, 
                           \norm{f}^{(1-r)\alpha}_{L_p}
  \leq r\norm{f}^\alpha_{L_2} + (1-r)\norm{f}^\alpha_{L_p}
  \leq \norm{f}^\alpha_{L_2} + \norm{f}^\alpha_{L_p}.
  \qedhere
  \]
\end{proof}


\section{Inequalities for martingales with values in Hilbert spaces}
\label{sec:H}
The following domination inequality, due to Lenglart \cite{Lenglart}, will be used several times.
\begin{lemma}
Let $X$ and $A$ be a positive adapted right-continuous process and an increasing predictable process, respectively, such that $\E[X_T|\mathcal{F}_0] \leq \E[A_T|\mathcal{F}_0]$ for any bounded stopping time. Then one has
\[
\E (X_\infty^*)^p \lesssim_p \E A_\infty^p \qquad \forall p \in ]{0,1}[.
\]
\end{lemma}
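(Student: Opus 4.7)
The standard route is to reduce to a weak-type inequality and then integrate. Specifically, I would first establish that for every $\lambda,c>0$,
\begin{equation}\label{eq:lenglart-weak}
\P(X_\infty^* > \lambda) \le \frac{1}{\lambda}\,\E[A_\infty \wedge c] + \P(A_\infty \ge c),
\end{equation}
and then insert this into the layer-cake formula $\E(X_\infty^*)^p = p\int_0^\infty \lambda^{p-1}\,\P(X_\infty^* > \lambda)\,d\lambda$ with the natural choice $c=\lambda$.

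To obtain \eqref{eq:lenglart-weak}, I would introduce the two hitting times
\[
\tau_\lambda := \inf\{t\ge 0:\, X_t > \lambda\}, \qquad S_c := \inf\{t\ge 0:\, A_t \ge c\}.
\]
Here $\tau_\lambda$ is a stopping time by the right-continuity of $X$, whereas $S_c$ is a \emph{predictable} stopping time by the predictability and monotonicity of $A$. For fixed $n\in\enne$, the stopping time $T_n := \tau_\lambda \wedge S_c \wedge n$ is bounded, so the hypothesis (after taking unconditional expectations) yields $\E X_{T_n} \le \E A_{T_n}$. On $\{\tau_\lambda \le n\}\cap\{A_\infty < c\}$ one has $S_c=\infty$, $T_n=\tau_\lambda$, and right-continuity of $X$ forces $X_{T_n}\ge \lambda$, so $\lambda\,\P(\tau_\lambda\le n,\,A_\infty<c) \le \E X_{T_n}$. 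For the right-hand side, exploiting the predictability of $A$ (via an announcing sequence for $S_c$ and monotone convergence, equivalently working with the left limit $A_{S_c-}\le c$) one gets $A_{T_n}\le A_\infty\wedge c$, hence $\E A_{T_n}\le \E[A_\infty\wedge c]$. Letting $n\to\infty$ and splitting according to whether $A_\infty<c$ or not produces \eqref{eq:lenglart-weak}.

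For the final integration, Fubini combined with direct computation gives
\[
p\int_0^\infty \lambda^{p-2}\,\E[A_\infty\wedge\lambda]\,d\lambda = \frac{1}{1-p}\,\E A_\infty^p,
\qquad
p\int_0^\infty \lambda^{p-1}\,\P(A_\infty\ge\lambda)\,d\lambda = \E A_\infty^p,
\]
so \eqref{eq:lenglart-weak} with $c=\lambda$ leads to $\E(X_\infty^*)^p \le \frac{2-p}{1-p}\,\E A_\infty^p$, which is finite precisely because $p\in ]{0,1}[$.

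The only delicate point is the control $A_{T_n}\le A_\infty\wedge c$: the process $A$ could in principle jump across the level $c$ at time $S_c$, and the predictability assumption is exactly what rules this out (through the existence of an announcing sequence), ensuring that $A$ cannot overshoot $c$ at a stopping time dominated by $S_c$. This is the main technical step; the rest is essentially Chebyshev and Fubini.
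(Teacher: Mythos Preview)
The paper does not prove this lemma at all: it is stated with attribution to Lenglart \cite{Lenglart} and used as a black box throughout. Your proof plan is the standard argument (weak-type bound plus layer-cake integration), and the computations you give are correct, yielding the sharp constant $\tfrac{2-p}{1-p}$.

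One small imprecision worth tightening when you write it out: the claim ``$A_{T_n}\le A_\infty\wedge c$'' is not literally true, since $A$ may jump over the level $c$ at time $S_c$ itself (so $A_{S_c}>c$ is possible). What you actually need is to apply the domination hypothesis at $\tau_\lambda\wedge R_k\wedge n$, where $(R_k)$ announces $S_c$, so that $A_{R_k}<c$; then pass to the limit in $k$ on the left-hand side using Fatou (or right-continuity of $X$). You already flag this as the delicate point and name the right tool, so this is only a matter of phrasing, not a gap in the strategy.
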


\begin{thm}     \label{thm:bj}
Let $\alpha \in [1,2]$. One has
\begin{equation}
\E(M^*_\infty)^p \lesssim_{\alpha,p}
\begin{cases}
  \displaystyle
  \E\biggl( \int \norm{g}^\alpha\,d\nu \biggr)^{p/\alpha}
  &\quad \forall p \in \left]0,\alpha\right],\\
  \displaystyle
  \E\biggl( \int \norm{g}^\alpha\,d\nu \biggr)^{p/\alpha} +
  \E \int \norm{g}^p\,d\nu
  &\quad \forall p \in \left[\alpha,\infty\right[,
\end{cases}
\tag{\textsf{BJ}}
\end{equation}
and
\begin{equation}     \label{eq:rBJ}
\E(M^*_\infty)^p \gtrsim_{\alpha,p}
\E\biggl( \int \norm{g}^2\,d\nu \biggr)^{p/2} +
  \E \int \norm{g}^p\,d\nu
  \qquad \forall p \in \left[2,\infty\right[.
\end{equation}
\end{thm}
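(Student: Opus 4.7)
The plan is to start from BDG, $\E(M^*_\infty)^p \lesssim_p \E [M, M]_\infty^{p/2}$, and exploit that for $\alpha \leq 2$ the sequence-space embedding $\ell^\alpha \hookrightarrow \ell^2$ gives
\[
[M, M]_\infty^{1/2} = \Bigl(\sum_s \norm{\Delta M_s}^2\Bigr)^{1/2} \leq \Bigl(\sum_s \norm{\Delta M_s}^\alpha\Bigr)^{1/\alpha} = \Bigl(\int \norm{g}^\alpha\,d\mu\Bigr)^{1/\alpha},
\]
so that $[M,M]_\infty^{p/2} \leq (\int \norm{g}^\alpha d\mu)^{p/\alpha}$. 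To swap $\mu$ for $\nu$ I would apply Lenglart's lemma to $X_t := \int_0^t \norm{g}^\alpha d\mu$ and its compensator $A_t := \int_0^t \norm{g}^\alpha d\nu$: $X$ is increasing and $X-A = \norm{g}^\alpha \star \m$ is a local martingale, hence $\E X_\infty^{p/\alpha} \lesssim \E A_\infty^{p/\alpha}$ for the exponent $p/\alpha \in (0,1)$; the endpoint $p=\alpha$ is simply $\E X_\infty = \E A_\infty$.

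\textbf{Upper bound, $p \in [\alpha, \infty)$.} On the range $p \in [\alpha, 2]$, BDG combined with the subadditive bound $(\sum a_i^2)^{p/2} \leq \sum a_i^p$ (valid since $p/2 \leq 1$) already yields the stronger estimate $\E(M^*_\infty)^p \lesssim \E \int \norm{g}^p d\nu$. For $p > 2$, $\phi: x \mapsto \norm{x}^p$ is $C^2$, so I would apply It\^o's formula to $\phi(M)$ and compensate, obtaining
\[
\E \norm{M_t}^p = \E \int_0^t \!\!\int_Z \bigl[\phi(M_{s-}+g) - \phi(M_{s-}) - \phi'(M_{s-}) g\bigr]\,d\nu.
\]
Using \eqref{eq:phi''} and Taylor's formula, the integrand is controlled by a constant multiple of $\norm{M_{s-}}^{p-2}\norm{g}^2 + \norm{g}^p$. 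Doob's maximal inequality replaces $\E \norm{M_t}^p$ by $\E(M^*_t)^p$, H\"older pulls $(M^*_t)^{p-2}$ out of the remaining integral, and a Young-type absorption (preceded by the standard localization that forces $\E(M^*_t)^p<\infty$) delivers
\[
\E(M^*_\infty)^p \lesssim_p \E\Bigl(\int \norm{g}^2 d\nu\Bigr)^{p/2} + \E \int \norm{g}^p d\nu.
\]
Interpolating $L_2(\nu)$ between $L_\alpha(\nu)$ and $L_p(\nu)$ exactly as in the proof of Lemma \ref{lm:interp} then replaces the first term by $\E(\int \norm{g}^\alpha d\nu)^{p/\alpha}$ modulo an extra $\E \int \norm{g}^p d\nu$, which is already on the right-hand side.

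\textbf{Lower bound \eqref{eq:rBJ}.} I would again start from BDG, this time its lower half, $\E(M^*_\infty)^p \gtrsim_p \E [M,M]_\infty^{p/2}$. Since $p/2 \geq 1$, superadditivity gives $[M,M]_\infty^{p/2} \geq \sum_s \norm{\Delta M_s}^p = \int \norm{g}^p d\mu$, and taking expectations yields $\E(M^*_\infty)^p \gtrsim \E \int \norm{g}^p d\nu$. For the $(\int \norm{g}^2 d\nu)^{p/2}$ term the key step is the compensator comparison $\E \ip{M}{M}_\infty^{p/2} \lesssim_p \E [M,M]_\infty^{p/2}$: applying It\^o to the continuous, increasing process $\ip{M}{M}^{p/2}$ gives
\[
\E \ip{M}{M}_\infty^{p/2} = \tfrac{p}{2}\, \E \int_0^\infty \ip{M}{M}_{s-}^{p/2-1}\,d\ip{M}{M}_s = \tfrac{p}{2}\, \E \int_0^\infty \ip{M}{M}_{s-}^{p/2-1}\,d[M,M]_s,
\]
the second equality because the integrand is predictable and $\ip{M}{M}$ is the dual predictable projection of $[M,M]$. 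H\"older with conjugate exponents $p/(p-2)$ and $p/2$ yields the compensator comparison after dividing through by $\norm{\ip{M}{M}_\infty}_{\L_{p/2}}^{p/2-1}$, and $\ip{M}{M}_\infty = \int \norm{g}^2 d\nu$ closes \eqref{eq:rBJ}.

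\textbf{Main obstacle.} The delicate step is the absorption in the It\^o argument for $p>2$: it presupposes the \emph{a priori} finiteness of $\E(M^*_t)^p$ and therefore calls for a reducing sequence of stopping times before the Young inequality can be used. A second, more conceptual subtlety is the passage from $[M,M]$ to $\ip{M}{M}$ in the lower bound, where what matters is not a pathwise inequality but rather the predictability of $\ip{M}{M}$ as compensator of $[M,M]$.
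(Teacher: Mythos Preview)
Your proposal is correct and follows essentially the same route as the paper: BDG plus the $\ell^\alpha \hookrightarrow \ell^2$ embedding and Lenglart for $p \leq \alpha$ (this is the paper's ``second proof''), It\^o's formula with a Taylor expansion and Young absorption for $p > 2$ (the paper's ``first proof''), and BDG plus superadditivity together with the compensator comparison for the lower bound. One minor caveat: your It\^o computation for $\ip{M}{M}^{p/2}$ in the lower bound tacitly assumes $\ip{M}{M}$ is continuous (true in the quasi-left-continuous case, e.g.\ Poisson $\nu$); the paper avoids this by simply citing the general inequality $\E\ip{M}{M}_\infty^{p/2} \leq \E[M,M]_\infty^{p/2}$ from Lenglart--L\'epingle--Pratelli.
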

\noindent Sometimes we shall use the notation $\mathsf{BJ}_{\alpha,p}$
to denote the inequality \textsf{BJ} with parameters $\alpha$ and $p$.

\medskip

Several proofs of \textsf{BJ} will be given below. Before doing that,
a few remarks are in order.
Choosing $\alpha=2$ and $\alpha=p$, respectively, one obtains the
probably more familiar expressions
\[
\E(M^*_\infty)^p \lesssim_p
\begin{cases}
  \displaystyle
  \E\biggl( \int \norm{g}^2\,d\nu \biggr)^{p/2}
  &\quad \forall p \in \left]0,2\right],\\
  \displaystyle
  \E \int \norm{g}^p\,d\nu
  &\quad \forall p \in [1,2],\\
  \displaystyle
  \E\biggl( \int \norm{g}^2\,d\nu \biggr)^{p/2} +
  \E \int \norm{g}^p\,d\nu
  &\quad \forall p \in \left[2,\infty\right[.
\end{cases}
\]
In more compact notation, \textsf{BJ} may equivalent be written as
\[
\norm{M^*_\infty}_{\L_p} \lesssim_{\alpha,p}
\begin{cases}
  \norm[\big]{g}_{\L_p(L_\alpha(\nu))}
       &\quad \forall p \in \left]0,\alpha\right],\\
  \norm[\big]{g}_{\L_p(L_\alpha(\nu))} + \norm[\big]{g}_{\L_p(L_p(\nu))} 
  &\quad \forall p \in \left[\alpha,\infty\right[,
\end{cases}
\]
where
\[
\norm[\big]{g}_{\L_p(L_\alpha(\nu))} :=
\norm[\big]{ \norm{g}_{L_\alpha(\nu)} }_{\L_p},
\qquad
\norm{g}_{L_\alpha(\nu)} := 
\biggl( \int \norm{g}^\alpha \,d\nu \biggr)^{1/\alpha}.
\]
This notation is convenient but slightly abusive, as it is not
standard (nor clear how) to define $L_p$ spaces with respect to a
random measure. However, if $\mu$ is a Poisson measure, then $\nu$ is
``deterministic'' (i.e. it does not depend on $\omega \in \Omega$),
and the above notation is thus perfectly lawful. In particular, if
$\nu$ is deterministic, it is rather straightforward to see that the
above estimates imply
\[
\norm[\big]{M^*_\infty}_{\L_p} \lesssim_p
\inf_{g_1+g_2=g} \norm[\big]{g_1}_{\L_p(L_2(\nu))} 
+ \norm[\big]{g_2}_{\L_p(L_p(\nu))}
=: \norm[\big]{g}_{\L_p(L_2(\nu)) + \L_p(L_p(\nu))},
\qquad 1 \leq p \leq 2,
\]
as well as
\[
\norm[\big]{M^*_\infty}_{\L_p} \lesssim_p
\max \Bigl( \norm[\big]{g}_{\L_p(L_2(\nu))}, \norm[\big]{g}_{\L_p(L_p(\nu))}
   \Bigr) =: \norm[\big]{g}_{\L_p(L_2(\nu)) \cap \L_p(L_p(\nu))},
\qquad p \geq 2
\]
(for the notions of sum and intersection of Banach spaces see
e.g. \cite{KPS}). Moreover, since the dual space of $\L_p(L_2(\nu))
\cap \L_p(L_p(\nu))$ is $\L_{p'}(L_2(\nu)) + \L_{p'}(L_{p'}(\nu))$ for
any $p \in [{1,\infty}[$, where $1/p+1/p'=1$, by a duality argument
one can obtain the lower bound
\[
\norm[\big]{M^*_\infty}_{\L_p} \gtrsim
\norm[\big]{g}_{\L_p(L_2(\nu)) + \L_p(L_p(\nu))}
\qquad \forall p \in ]{1,2}].
\]
One thus has
\[
\norm[\big]{M^*_\infty}_{\L_p} \eqsim_p
\begin{cases}
\norm[\big]{g}_{\L_p(L_2(\nu)) + \L_p(L_p(\nu))} &\quad
\forall p \in ]{1,2}],\\
\norm[\big]{g}_{\L_p(L_2(\nu)) \cap \L_p(L_p(\nu))} &\quad
\forall p \in [{2,\infty}[.
\end{cases}
\]
By virtue of the L\'evy-It\^o decomposition and of the BDG inequality
for stochastic integrals with respect to Wiener processes, the above
maximal inequalities admit corresponding versions for stochastic
integrals with respect to L\'evy processes (cf.~\cite{HauSei2,cm:MF10}). We do
not dwell on details here.

\subsection{Proofs}
We first prove the lower bound \eqref{eq:rBJ}. The proof is taken from
\cite{cm:SEE2} (we recently learned, however, cf. Section
\ref{sec:storia} below, that the same argument already appeared in
\cite{DzhapValk}).
\begin{proof}[Proof of \eqref{eq:rBJ}]
  Since $p/2>1$, one has
  \[
  \E[M,M]_\infty^{p/2} = \E\Bigl( \sum \norm{\Delta M}^2 \Bigr)^{p/2}
  \geq \E\sum \norm{\Delta M}^p = \E\int \norm{g}^p\,d\mu
  = \E\int \norm{g}^p\,d\nu,
  \]
  as well as, since $x \mapsto x^{p/2}$ is convex,
  \[
  \E[M,M]^{p/2}_\infty \geq \E\ip{M}{M}^{p/2}_\infty
  = \E\biggl( \int \norm{g}^2\,d\nu \biggr)^{p/2},
  \]
  see e.g. \cite{LeLePr}. Therefore, recalling the BDG inequality,
  \[
  \E(M_\infty^*)^p \gtrsim \E[M,M]^{p/2}_\infty \gtrsim
  \E\biggl( \int \norm{g}^2\,d\nu \biggr)^{p/2}
  + \E\int \norm{g}^p\,d\nu.
  \qedhere
  \]
\end{proof}

We now give several alternative arguments for the upper bounds.

The first proof we present is based on It\^o's formula and Lenglart's
domination inequality. It does not rely, in particular, on the BDG
inequality, and it is probably, in this sense, the most elementary.
\begin{proof}[First proof of $\mathsf{BJ}$]
  Let $\alpha \in ]{1,2}]$, and $\phi: H \ni x \mapsto \norm{x}^\alpha
  = h(\norm{x}^2)$, with $h: y \mapsto y^{\alpha/2}$. Furthermore, let
  $(h_n)_{n\in\enne}$ be a sequence of functions of class
  $C^\infty_c(\erre)$ such that $h_n \to h$ pointwise, and define
  $\phi_n: x \mapsto h_n(\norm{x}^2)$, so that $\phi_n \in
  C^2_b(H)$ \footnote{The subscript $\cdot_c$ means ``with compact
    support'', and $C^2_b(H)$ denotes the set of twice continuously
    differentiable functions $\varphi:H \to \erre$ such that
    $\varphi$, $\varphi'$ and $\varphi''$ are bounded.}.
  It\^o's formula (see e.g. \cite{Met}) then yields
  \[
  \phi_n(M_\infty) = \int_0^\infty \phi_n'(M_-)\,dM
  + \sum \bigl( \phi_n(M_- + \Delta M) - \phi_n(M_-) 
         - \phi'_n(M_-)\Delta M \bigr).
  \]
  Taking expectation and passing to the limit as $n \to \infty$, one
  has, by estimate \eqref{eq:furba} and the dominated convergence
  theorem,
  \begin{align*}
    \E\norm{M_\infty}^\alpha &\leq \E \sum \bigl( 
       \norm{M_- + \Delta M}^\alpha - \norm{M_-}^\alpha
       - \alpha\norm{M_-}^{\alpha-2}\ip{M_-}{\Delta M} \bigr)\\
    &\lesssim_\alpha \E \sum \norm{\Delta M}^\alpha
    = \E \int \norm{g}^\alpha\,d\mu = \E \int \norm{g}^\alpha\,d\nu,
  \end{align*}
  which implies, by Doob's inequality,
  \[
  \E(M^*_\infty)^\alpha \lesssim_\alpha \E \int \norm{g}^\alpha\,d\nu.
  \]
  If $\alpha=1$ we cannot use Doob's inequality, but we can
  argue by a direct calculation:
  \begin{align*}
    \E M^*_\infty = \E\sup_{t \geq 0} \norm[\bigg]{\int_0^t g\,d\m} &\leq
    \E\sup_{t \geq 0} \norm[\bigg]{\int_0^t g\,d\mu}
    + \E\sup_{t \geq 0} \norm[\bigg]{\int_0^t g\,d\nu}\\
    &\leq \E\sup_{t \geq 0} \int_0^t \norm{g}\,d\mu
    + \E\sup_{t \geq 0} \int_0^t \norm{g}\,d\nu\\
    &\leq 2\E\int \norm{g}\,d\nu.
  \end{align*}
  An application of Lenglart's domination inequality finishes the
  proof of the case $\alpha\in [1,2]$, $p \in ]{0,\alpha}]$.

  Let us now consider the case $\alpha=2$, $p>2$. We apply It\^o's
  formula to a $C^2_b$ approximation of $x \mapsto \norm{x}^p$, as in
  the first part of the proof, then take expectation and pass to the
  limit, obtaining
  \[
  \E\norm{M_\infty}^p \leq 
  \E \sum \bigl( \norm{M_-+\Delta M}^p - \norm{M_-}^p 
  - p \norm{M_{-}}^{p-2} \ip{M_{-}}{\Delta M_s} \bigr).
  \]
  Applying Taylor's formula to the function $t \mapsto \norm{x+ty}$ we
  obtain, in view of \eqref{eq:phi''},
  \begin{align*}
    &\norm{M_-+\Delta M}^p - \norm{M_-}^p 
    - p \norm{M_{-}}^{p-2} \ip{M_{-}}{\Delta M}\\
    &\qquad = \frac12 p(p-2) \norm[\big]{M_-+\theta\Delta M}^{p-4}
    \ip{M_-+\theta\Delta M}{\Delta M}^2\\
    &\qquad\quad + \frac12 p \norm[\big]{M_-+\theta\Delta M}^{p-2}
    \norm{\Delta M}^2\\
    &\qquad \leq \frac12 p(p-1) \norm[\big]{M_-+\theta\Delta
      M}^{p-2} \norm{\Delta M}^2,
  \end{align*}
  where $\theta \equiv \theta_s \in \left]0,1\right[$.  Since
  $\norm{M_-+\theta\Delta M} \leq \norm{M_-} + \norm{\Delta M}$, we
  also have
  \[
  \norm[\big]{M_- + \theta\Delta M}^{p-2} \lesssim_p
  \norm{M_-}^{p-2} + \norm{\Delta M}^{p-2} \leq (M^*_-)^{p-2}
  + \norm{\Delta M}^{p-2}.
  \]
  Appealing to Doob's inequality, one thus obtains
  \begin{align*}
    \E\bigl( M^*_\infty \bigr)^p \lesssim_p \E \norm{M_\infty}^p
    &\lesssim_p \E \sum \bigl( (M^*_-)^{p-2} \norm{\Delta M}^2 +
    \norm{\Delta M}^p
    \bigr)\\
    &= \E \int \bigl( (M^*_-)^{p-2} \norm{g}^2 + \norm{g}^p \bigr)\,d\mu\\
    &= \E \int \bigl( (M^*_-)^{p-2} \norm{g}^2 + \norm{g}^p \bigr)\,d\nu\\
    &\leq \E (M^*_\infty)^{p-2} \int \norm{g}^2 \,d\nu + \E\int
    \norm{g}^p\,d\nu.
  \end{align*}
  By Young's inequality in the form
  \[
  ab \leq \varepsilon a^{\frac{p}{p-2}} + N(\varepsilon) b^{p/2},
  \]
  we are left with
  \[
  \E (M^*_\infty)^p \leq \varepsilon N(p) \E (M^*_\infty)^p +
  N(\varepsilon,p) \E \biggl( \int \norm{g}^2 \,d\nu \biggr)^{p/2} +
  \E\int \norm{g}^p\,d\nu.
  \]
  The proof of the case $p>\alpha=2$ is completed choosing
  $\varepsilon$ small enough.

  We are thus left with the case $\alpha \in [{1,2}[$, $p>\alpha$.
  Note that, by Lemma \ref{lm:interp},
  \[
  \norm{\cdot}_{L_2(\nu)} \leq
  \norm{\cdot}_{L_2(\nu)} + \norm{\cdot}_{L_p(\nu)} \lesssim 
  \norm{\cdot}_{L_\alpha(\nu)} + \norm{\cdot}_{L_p(\nu)},
  \]
  hence the desired result follows immediately by the cases with $\alpha=2$
  proved above.
\end{proof}
\begin{rmk}
  The proof of $\mathsf{BJ}_{2,p}$, $p \geq 2$, just given is a
  (minor) adaptation of the proof in \cite{cm:JFA10}, while the other
  cases are taken from \cite{cm:SEE2}. However
  (cf.~Section~\ref{sec:storia} below), essentially the same result
  with a very similar proof was already given by Novikov
  \cite{Nov:75}. In the latter paper the author treats the
  finite-dimensional case, but the constants are explicitly
  dimension-free. Moreover, he deduces the case $p < \alpha$ from the
  case $p=\alpha$ using the extrapolation principle of Burkholder and
  Gundy \cite{BurGun}, where we used instead Lenglart's domination
  inequality. However, the proof of the latter is based on the former.
\end{rmk}

\begin{proof}[Second proof of $\mathsf{BJ}_{\alpha,p}$ ($p \leq \alpha$)]
  An application of the BDG inequality to $M$, taking into account
  that $\alpha/2 \leq 1$, yields
  \[
  \E (M_T^*)^\alpha \lesssim_\alpha
  \E \Bigl( \sum\nolimits_{\leq T} \norm{\Delta M}^2 \Bigr)^{\alpha/2} \leq
  \E \sum\nolimits_{\leq T} \norm{\Delta M}^\alpha =
  \E \bigl( \norm{g}^\alpha \star \mu \bigr)_T
  = \E \bigl( \norm{g}^\alpha \star \nu \bigr)_T
  \]
  for any stopping time $T$. The result then follows by Lenglart's
  domination inequality.
\end{proof}

We are now going to present several proofs for the case $p >
\alpha$. As seen at the end of the first $\mathsf{BJ}$, it suffices to consider the case $p>\alpha=2$.

\begin{proof}[Second proof of $\mathsf{BJ}_{2,p}$ ($p>2$)]
  Let us show that $\textsf{BJ}_{2,2p}$ holds if
  $\textsf{BJ}_{2,p}$ does: the identity
  \[ 
  [M,M] = \norm{g}^2 \star \mu = \norm{g}^2 \star \m 
  + \norm{g}^2 \star \nu,
  \]
  the BDG inequality, and $\mathsf{BJ}_{2,p}$ imply
  \begin{equation}     \label{eq:p2p}
  \begin{split}
  \E (M_\infty^*)^{2p} &\lesssim_p
  \E[M,M]_\infty^p \lesssim \E \big\lvert 
  \bigl( \norm{g}^2 \star \m \bigr)_\infty \big\rvert^p
  + \E \bigl( \norm{g}^2 \star \nu \bigr)_\infty^p\\
  &\lesssim_p \E \int \norm{g}^{2p}\,d\nu
  + \E\biggl( \int \norm{g}^4\,d\nu \biggr)^{p/2}
  + \E\biggl( \int \norm{g}^2\,d\nu \biggr)^{\frac12\,2p}\\
  &= \norm[\big]{g}^{2p}_{L_{2p}(\nu)}
     + \norm[\big]{g}^{2p}_{L_4(\nu)} + \norm[\big]{g}^{2p}_{L_2(\nu)}
  \end{split}
  \end{equation}
  Since $2 < 4 < 2p$, one has, by Lemma \ref{lm:interp},
  \[
  \norm[\big]{g}^{2p}_{L_4(\nu)} \leq \norm[\big]{g}^{2p}_{L_{2p}(\nu)} +
  \norm[\big]{g}^{2p}_{L_2(\nu)},
  \]
  which immediately implies that $\mathsf{BJ}_{2,2p}$ holds true. Let
  us now show that $\mathsf{BJ}_{2,p}$ implies $\mathsf{BJ}_{2,2p}$ also
  for any $p \in [1,2]$. Recalling that $\mathsf{BJ}_{2,p}$ does
  indeed hold for $p \in [1,2]$, this proves that $\mathsf{BJ}_{2,p}$
  holds for all $p \in [2,4]$, hence for all $p \geq 2$, thus
  completing the proof. In fact, completely similarly as above,
  one has, for any $p \in [1,2]$,
  \begin{align*}
  \E (M_\infty^*)^{2p} &\lesssim_p
  \E \big\lvert \bigl(\norm{g}^2 \star \m\bigr)_\infty \big\rvert^p
  + \E \bigl( \norm{g}^2 \star \nu \bigr)_\infty^p\\
  &\lesssim_p \E \int \norm{g}^{2p}\,d\nu
  + \E\biggl( \int \norm{g}^2\,d\nu \biggr)^{\frac12\,2p}. \qedhere
  \end{align*}
\end{proof}
\begin{rmk}
  The above proof, with $p>2$, is adapted from \cite{BJ}, where the
  authors assume $H=\erre$ and $p=2^n$, $n \in \enne$, mentioning that
  the extension to any $p \geq 2$ can be obtained by an interpolation
  argument.
\end{rmk}

\begin{proof}[Third proof of $\mathsf{BJ}_{2,p}$ ($p>2$)]
  Let $k \in \enne$ be such that $2^k \leq p < 2^{k+1}$. Applying the
  BDG inequality twice, one has
  \[
  \E\norm[\big]{(g \star \m)_\infty}^p 
  \lesssim_p \E \bigl( \norm{g}^2 \star \mu \bigr)_\infty^{p/2}
  \lesssim_p \E \big\lvert \bigl(\norm{g}^2 \star \m\bigr)_\infty 
                \big\rvert^{p/2}
  + \E \bigl( \norm{g}^2 \star \nu \bigr)_\infty^{p/2},
  \]
  where
  \[
  \E \big\lvert \bigl(\norm{g}^2 \star \m\bigr)_\infty \big\rvert^{p/2}
  \lesssim_p
  \E \bigl( \norm{g}^2 \star \mu \bigr)_\infty^{p/4} \lesssim_p
  \E \big\lvert \bigl(\norm{g}^4 \star \m\bigr)_\infty \big\rvert^{p/4}
  + \E \bigl( \norm{g}^4 \star \nu \bigr)_\infty^{p/4}.
  \]
  Iterating we are left with
  \[
  \E\norm[\big]{(g \star \m)_\infty}^p \lesssim_p
  \E\bigl( \norm{g}^{2^{k+1}} \star \mu \bigr)_\infty^{p/2^{k+1}}
  + \sum_{i=1}^k \E\biggl( \int \norm{g}^{2^i}\,d\nu \biggr)^{p/2^i},
  \]
  where, recalling that $p/2^{k+1} < 1$,
  \begin{align*}
    \E\bigl( \norm{g}^{2^{k+1}} \star \mu \bigr)_\infty^{p/2^{k+1}}
    &= \E \Bigl( \sum \norm{\Delta M}^{2^{k+1}} \Bigr)^{p/2^{k+1}}\\
    &\leq \E \sum \norm{\Delta M}^p = \E\int \norm{g}^p\,d\mu
    = \E\int \norm{g}^p\,d\nu.
  \end{align*}
  The proof is completed observing that, since $2 \leq 2^i \leq p$ for
  all $1 \leq i \leq k$, one has, by Lemma \ref{lm:interp},
  \begin{align*}
    \E \biggl( \int \norm{g}^{2^i}\,d\nu \biggr)^{p/2^i} =
    \E\norm[\big]{g}^p_{L_{2^i}(\nu)}
    &\leq \E\norm[\big]{g}^p_{L_2(\nu)} + \E\norm[\big]{g}^p_{L_p(\nu)}\\
    &= \E\biggl( \int \norm{g}^2\,d\nu \biggr)^{p/2}
       + \E \int \norm{g}^p\,d\nu.
  \qedhere
  \end{align*}
\end{proof}
\begin{rmk}
  The above proof, which can be seen as a variation of the previous
  one, is adapted from \cite[Lemma~4.1]{ProTal-Euler} (which was
  translated to the $H$-valued case in \cite{cm:MF10}). In
  \cite{ProTal-Euler} the interpolation step at the end of the proof
  is obtained in a rather tortuous (but interesting way), which is
  not reproduced here.
\end{rmk}

The next proof is adapted from \cite{JKMP}.
\begin{proof}[Fourth proof of $\mathsf{BJ}_{2,p}$ ($p>2$)]
  Let us start again from the BDG inequality:
  \[
  \E (M_\infty^*)^p \lesssim_p \E[M,M]_\infty^{p/2}.
  \]
  Since $[M,M]$ is a real, positive, increasing, purely discontinuous
  process with $\Delta [M,M]=\norm{\Delta M}^2$, one has
  \begin{align*}
    [M,M]_\infty^{p/2} &= \sum \bigl( [M,M]^{p/2} - [M,M]_{-}^{p/2} \bigr)\\
    &= \sum \Bigl( \bigl( [M,M]_{-} + \norm{\Delta M}^2 \bigr)^{p/2}
       - [M,M]_{-}^{p/2} \Bigr).
  \end{align*}
  For any $a$, $b \geq 0$, the mean value theorem applied to the
  function $x \mapsto x^{p/2}$ yields the inequality
  \[
  (a+b)^{p/2} - a^{p/2} = (p/2) \xi^{p/2-1} b \leq (p/2)
  (a+b)^{p/2-1}b \leq (p/2) 2^{p/2-1}(a^{p/2-1}b+b^{p/2}),
  \]
  where $\xi \in \left]a,b\right[$, hence also
  \[
  \bigl( [M,M]_- + \norm{\Delta M}^2 \bigr)^{p/2} - [M,M]_-^{p/2}
  \lesssim_p [M,M]_-^{p/2-1} \norm{\Delta M}^2 + \norm{\Delta M}^p.
  \]
  This in turn implies
  \begin{align*}
    \E[M,M]_\infty^{p/2} &\lesssim_p
    \sum \Bigl( [M,M]_-^{p/2-1} \norm{\Delta M}^2 + \norm{\Delta M}^p \Bigr)\\
    &= \E \int \Bigl( [M,M]_-^{p/2-1} \norm{g}^2 + \norm{g}^p \Bigr) \,d\mu\\
    &= \E \int \Bigl( [M,M]_-^{p/2-1} \norm{g}^2 + \norm{g}^p \Bigr) \,d\nu\\
    &\leq \E [M,M]_\infty^{p/2-1} \int \norm{g}^2\,d\nu + \E \int
    \norm{g}^p \,d\nu.
  \end{align*}
  By Young's inequality in the form
  \[
  a^{p/2-1}b \leq \varepsilon a^{p/2} + N(\varepsilon) b^{p/2}, \qquad
  a,\,b \geq 0,
  \]
  one easily infers
  \[
  \E[M,M]_\infty^{p/2} \lesssim_p \E \biggl( \int \norm{g}^2 \,d\nu
  \biggr)^{p/2} + \E \int \norm{g}^p \,d\nu,
  \]
  thus concluding the proof.
\end{proof}

\subsection{A (too?) sophisticated proof}
\label{ssec:zappa} 
In this subsection we prove a maximal inequality valid for any
$H$-valued local martingale $M$ (that is, we do not assume that $M$ is
purely discontinuous), from which $\mathsf{BJ}_{2,p}$, $p>2$, follows
immediately.
\begin{thm}     \label{thm:zappa}
  Let $M$ be \emph{any} local martingale with values in $H$. One has,
  for any $p \geq 2$,
  \[
  \E (M^*_\infty)^p \lesssim_p \E\ip{M}{M}_\infty^{p/2} + \E\bigl(
  (\Delta M)^*_\infty \bigr)^p.
  \]
\end{thm}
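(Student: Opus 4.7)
The plan is to reduce, via two applications of the BDG inequality, to estimating the real-valued local martingale $N := [M,M] - \ip{M}{M}$, which is purely discontinuous with jumps $\Delta N_s = \norm{\Delta M_s}^2$.

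First I would apply BDG to $M$ to get $\E(M^*_\infty)^p \lesssim_p \E[M,M]_\infty^{p/2}$. Decomposing $[M,M]_\infty = \ip{M}{M}_\infty + N_\infty$, and using the elementary bound $(a+b)^{p/2}\lesssim_p a^{p/2}+\lvert b\rvert^{p/2}$ valid for $a\geq 0$ and $a+b\geq 0$, this gives
\[
\E[M,M]_\infty^{p/2} \lesssim_p \E\ip{M}{M}_\infty^{p/2} + \E\lvert N_\infty\rvert^{p/2}.
\]

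Next, since $p/2\geq 1$, BDG applied to the real-valued local martingale $N$ yields $\E\lvert N_\infty\rvert^{p/2}\leq \E(N^*_\infty)^{p/2}\lesssim_p \E[N,N]_\infty^{p/4}$. The crucial pointwise observation is
\[
[N,N]_\infty = \sum_s \norm{\Delta M_s}^4 \leq \bigl((\Delta M)^*_\infty\bigr)^2 \sum_s \norm{\Delta M_s}^2 \leq \bigl((\Delta M)^*_\infty\bigr)^2\,[M,M]_\infty.
\]
Raising to the power $p/4$ and invoking the Cauchy--Schwarz inequality,
\[
\E[N,N]_\infty^{p/4} \leq \bigl(\E\bigl((\Delta M)^*_\infty\bigr)^p\bigr)^{1/2}\bigl(\E[M,M]_\infty^{p/2}\bigr)^{1/2}.
\]

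Combining the three displays and applying Young's inequality $2xy\leq\varepsilon x^2+\varepsilon^{-1}y^2$ with $\varepsilon$ small, a term proportional to $\E[M,M]_\infty^{p/2}$ appears on the right-hand side and can be absorbed into the left, leaving the desired bound after a further application of BDG in the form $\E[M,M]_\infty^{p/2}\lesssim_p \E(M^*_\infty)^p$ is not actually needed --- it suffices to carry the chain of inequalities through and absorb at the level of $\E[M,M]_\infty^{p/2}$. The main obstacle is that absorption is only legitimate when $\E[M,M]_\infty^{p/2}<\infty$; the standard remedy is to localize by a sequence of stopping times $(T_n)\uparrow\infty$ reducing $M$ to a bounded martingale, establish the inequality for each $M^{T_n}$ (where every quantity is finite), and then pass to the limit via monotone convergence on both sides. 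If $\ip{M}{M}$ fails to exist (i.e.\ $M$ is not locally square-integrable), the right-hand side is infinite by convention and the statement is vacuous.
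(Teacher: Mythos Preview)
Your argument is essentially correct and takes a genuinely different route from the paper's. The paper uses Davis' decomposition $M=L+K$, where $L$ has predictably bounded jumps and $K$ has integrable variation, then combines a maximal inequality for martingales with predictably bounded jumps (from Lenglart--L\'epingle--Pratelli), the moment inequality between a process and its dual predictable projection, and a Stein-type inequality for predictable projections. Your approach---two applications of BDG plus Cauchy--Schwarz and an absorption argument---is considerably more elementary and self-contained; it bypasses Davis' decomposition and the auxiliary inequalities entirely. The paper's route, on the other hand, exhibits more of the structural decomposition underlying such estimates and connects to a broader toolbox.

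There is one gap worth flagging. You write that $N=[M,M]-\ip{M}{M}$ has jumps $\Delta N_s=\norm{\Delta M_s}^2$, hence $[N,N]_\infty=\sum_s\norm{\Delta M_s}^4$. This is only correct when $\ip{M}{M}$ is continuous (equivalently, when $M$ is quasi-left-continuous). In general $\ip{M}{M}$ may jump at predictable times, and $\Delta N_s=\norm{\Delta M_s}^2-\Delta\ip{M}{M}_s$, so
\[
[N,N]_\infty=\sum_s\bigl(\norm{\Delta M_s}^2-\Delta\ip{M}{M}_s\bigr)^2
\leq 2\sum_s\norm{\Delta M_s}^4+2\sum_s\bigl(\Delta\ip{M}{M}_s\bigr)^2.
\]
The fix is painless: the second sum is bounded by $\ip{M}{M}_\infty^2$ (each summand is at most $\ip{M}{M}_\infty\cdot\Delta\ip{M}{M}_s$, and the jumps sum to at most $\ip{M}{M}_\infty$), so after raising to the power $p/4$ and taking expectations you pick up an extra $\E\ip{M}{M}_\infty^{p/2}$, which is already present on the right-hand side. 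With this correction your absorption argument goes through unchanged, and the localization you describe handles the finiteness issue.
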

\begin{proof}
  We are going to use Davis' decomposition (see \cite{Mey:dual} for a
  very concise proof in the case of real martingales, a detailed
  ``transliteration'' of which to the case of Hilbert-space-valued
  martingales can be found in \cite{cm:BDG}): setting $S:=(\Delta M)^*$,
  one has $M=L+K$, where $L$ and $K$ are martingales satisfying the
  following properties:
  \begin{itemize}
    \setlength{\itemsep}{0pt}
  \item[(i)] $\norm{\Delta L} \lesssim S_-$;
  \item[(ii)] $K$ has integrable variation and
    $K=K^1+\widetilde{K^1}$, where $\widetilde{K^1}$ is the
    predictable compensator of $K^1$ and $\int |dK^1| \lesssim
    S_\infty$.
  \end{itemize}
  Since $M^* \leq L^* + K^*$, we have
  \[
  \norm{M^*_\infty}_{\L_p} \leq \norm{L^*_\infty}_{\L_p} +
  \norm{K^*_\infty}_{\L_p},
  \]
  where, by the BDG inequality, $\norm{K^*_\infty}_{\L_p} \lesssim_p
  \norm{[K,K]^{1/2}}_{\L_p}$. Moreover, by the maximal inequality for
  martingales with predictably bounded jumps in
  \cite[p.~37]{LeLePr}\footnote{One can verify that the proof in
    \cite{LeLePr} goes through without any change also for
    Hilbert-space-valued martingales.} and the elementary estimate
  $\ip{L}{L}^{1/2} \leq \ip{M}{M}^{1/2} + \ip{K}{K}^{1/2}$, one has
  \begin{align*}
    \norm{L^*_\infty}_{\L_p} &\lesssim_p
    \norm{\ip{L}{L}^{1/2}_\infty}_{\L_p}
    + \norm{S_\infty}_{\L_p}\\
    &\leq \norm{\ip{M}{M}^{1/2}_\infty}_{\L_p} +
    \norm{\ip{K}{K}^{1/2}_\infty}_{\L_p} + \norm{(\Delta
      M)^*_\infty}_{\L_p}.
  \end{align*}
  Since $p \geq 2$, the inequality between moments of a process and of
  its dual predictable projection in \cite[Theoreme 4.1]{LeLePr}
  yields $\norm{\ip{K}{K}^{1/2}}_{\L_p} \lesssim_p
  \norm{[K,K]^{1/2}}_{\L_p}$.  In particular, we are left with
  \[
  \norm{M^*_\infty}_{\L_p} \lesssim_p
  \norm{\ip{M}{M}^{1/2}_\infty}_{\L_p} 
  + \norm{(\Delta M)^*_\infty}_{\L_p} 
  + \norm{[K,K]^{1/2}_\infty}_{\L_p}.
  \]
  Furthermore, applying a version of Stein's inequality between
  moments of a process and of its predictable projection (see
  e.g. \cite{cm:BDG}, and \cite[p.~103]{Stein-LP} for the original formulation), one has, for $p \geq 2$,
  \[
  \norm{[\widetilde{K^1},\widetilde{K^1}]^{1/2}}_{\L_p} \lesssim_p
  \norm{[K^1,K^1]^{1/2}}_{\L_p},
  \]
  hence, recalling property (ii) above and that the quadratic
  variation of a process is bounded by its first variation, we are
  left with
  \begin{align*}
    \norm{[K,K]^{1/2}}_{\L_p} &\leq \norm{[K^1,K^1]^{1/2}}_{\L_p}
    + \norm{[\widetilde{K^1},\widetilde{K^1}]^{1/2}}_{\L_p}\\
    &\lesssim_p \norm{[K^1,K^1]^{1/2}}_{\L_p} \leq \norm[\bigg]{\int
      |dK^1|}_{\L_p} \lesssim \norm{(\Delta M)^*_\infty}_{\L_p}.
  \qedhere
  \end{align*}
\end{proof}

It is easily seen that Theorem \ref{thm:zappa} implies
$\mathsf{BJ}_{2,p}$ (for $p \geq 2$): in fact, one has
\[
\E\bigl( (\Delta M)^* \bigr)^p \leq 
\E \sum\norm{\Delta M}^p = \E \int \norm{g}^p \,d\nu.
\]
\begin{rmk}
  The above proof is a simplified version of an argument from
  \cite{cm:SEE2}. As we recently learned, however, a similar argument
  was given in \cite{DzhapValk}. As a matter of fact, their proofs is
  somewhat shorter than ours, as they claim that
  $[K,L]=0$. Unfortunately, we have not been able to prove this claim.
\end{rmk}

\subsection{A conditional proof}     \label{ssec:khin}
The purpose of this subsection is to show that if $\mathsf{BJ}_{2,p}$,
$p \geq 2$, holds for real (local) martingales, then it also holds for
(local) martingales with values in $H$. For this we are going to use
Khinchine's inequality: let $x \in H$, and $\{e_k\}_{k\in\enne}$ be an
orthonormal basis of $H$. Setting $x_k:=\ip{x}{e_k}$. Then one has
\[
\norm{x} = \Bigl( \sum_k x_k^2 \Bigr)^{1/2} 
= \norm[\Big]{\sum_k x_k\varepsilon_k}_{L_2(\bar{\Omega})}
\eqsim \norm[\Big]{\sum_k x_k\varepsilon_k}_{L_p(\bar{\Omega})},
\]
where $(\bar{\Omega},\bar{\mathcal{F}},\bar{\P})$ is an auxiliary
probability space, on which a sequence $(\varepsilon_k)$ of i.i.d
Rademacher random variables are defined.

Writing
\[
M_k:=\ip{M}{e_k} = g_k \star \m, \qquad g_k := \ip{g}{e_k},
\]
one has $\sum_k M_k \varepsilon_k = \Bigl(\sum_k g_k \varepsilon_k
\Bigr) \star \m$, hence Khinchine's inequality, Tonelli's theorem, and
Theorem \ref{thm:bj} for real martingales yield
\begin{align*}
  \E\norm{M}^p &\eqsim \E \norm[\Big]{\Bigl(\sum g_k \varepsilon_k
  \Bigr) \star \m}^p_{L_p(\bar{\Omega})}
  = \bar{\E} \, \E \biggl|
  \Bigl(\sum g_k \varepsilon_k \Bigr) \star \m \biggr|^p\\
  &\lesssim_p \bar{\E} \, \E \biggl( \int \Bigl\vert
  \sum g_k\varepsilon_k \Bigr\vert^2 \,d\nu \biggr)^{p/2} +
  \bar{\E} \, \E \int \Bigl\vert \sum g_k\varepsilon_k \Bigr\vert^p \,d\nu\\
  &=: I_1 + I_2.
\end{align*}
Tonelli's theorem, together with Minkowski's and Khinchine's
inequalities, yield
\begin{align*}
I_1 = \E\,\bar{\E} \biggl(\int \Bigl\vert 
\sum g_k\varepsilon_k \Bigr\vert^2 \,d\nu \biggr)^{p/2}
&= \E\norm[\bigg]{%
\int \Bigl\vert \sum g_k\varepsilon_k \Bigr\vert^2 \,d\nu%
}^{p/2}_{L_{p/2}(\bar{\Omega})}\\
&\leq \E \biggl( \int \norm[\Big]{%
\Bigl\vert \sum g_k\varepsilon_k \Bigr\vert^2}_{L_{p/2}(\bar{\Omega})}\,d\nu
\biggr)^{p/2}\\
&= \E \biggl( \int \norm[\Big]{\sum g_k\varepsilon_k}^2_{L_p(\bar{\Omega})}\,d\nu
\biggr)^{p/2}
\eqsim \biggl( \int \norm{g}^2 \,d\nu \biggr)^{p/2}.
\end{align*}
Similarly, one has
\[
I_2 = \E \int \norm[\Big]{\sum g_k\varepsilon_k}^p_{L_p(\bar{\Omega})}\,d\nu
\eqsim \E \int \norm{g}^p \,d\nu.
\]
The proof is completed appealing to Doob's inequality.\hfill$\qed$
\begin{rmk}
  This conditional proof has probably not appeared in published form,
  although the idea is contained in \cite{Knoche-diss}.
\end{rmk}


\section{Inequalities for Poisson stochastic integrals with values in
  $L_q$ spaces}     \label{sec:lq}
Even though there exist in the literature some maximal inequalities
for stochastic integrals with respect to compensated Poisson random
measures and Banach-space-valued integrands, here we limit ourselves
to reporting about (very recent) two-sided estimates in the case of
$L_q$-valued integrands.
Throughout this section we assume that $\mu$ is a Poisson random
measure, so that its compensator $\nu$ is of the form $\mathrm{Leb}
\otimes \nu_0$, where $\mathrm{Leb}$ stands for the one-dimensional
Lebesgue measure and $\nu_0$ is a (non-random) $\sigma$-finite measure on $Z$.  Let
$(X,\mathcal{A},n)$ be a measure space, and denote $L_q$ spaces on $X$
simply by $L_q$, for any $q \geq 1$. Moreover, let us introduce the
following spaces, where $p_1,p_2,p_3 \in [{1,\infty}[$:
\[
L_{p_1,p_2,p_3} := \L_{p_1}(L_{p_2}(\erre_+ \times Z \to L_{p_3}(X))),
\qquad
\tilde{L}_{p_1,p_2} := \L_{p_1}(L_{p_2}(X \to L_2(\erre_+ \times Z))).
\]
Then one has the following result, due to Dirksen \cite{Dirksen}:
\[
\norm[\big]{\sup_{t \geq 0} \norm{(g \star \m)_t}_{L_q}}_{\L_p} 
\eqsim_{p,q} \norm{g}_{\mathcal{I}_{p,q}},
\]
where
\begin{equation}     \label{eq:ipq}
\mathcal{I}_{p,q} := 
\begin{cases}
L_{p,p,q} + L_{p,q,q} + \tilde{L}_{p,q}, &\quad 1 < p \leq q \leq 2,\\
(L_{p,p,q} \cap L_{p,q,q}) + \tilde{L}_{p,q}, &\quad 1 < q \leq p \leq 2,\\
L_{p,p,q} \cap (L_{p,q,q} + \tilde{L}_{p,q}), &\quad 1 < q < 2 \leq p,\\
L_{p,p,q} + (L_{p,q,q} \cap \tilde{L}_{p,q}), &\quad 1 < p < 2 \leq q,\\
(L_{p,p,q} + L_{p,q,q}) \cap \tilde{L}_{p,q}, &\quad 2 \leq p \leq q,\\
L_{p,p,q} \cap L_{p,q,q} \cap \tilde{L}_{p,q}, &\quad 2 \leq q \leq p.
\end{cases}
\end{equation}
The proof of this result is too long to be included here. We limit
instead ourselves to briefly recalling what the main ``ingredients''
are: one first establishes extensions of the classical Rosenthal
inequality
\[
\E\Big\lvert \sum \xi_k \Big\rvert^p \lesssim_p \max \biggl(
\E \sum \lvert\xi_k\rvert^p,
\Bigl( \E \sum \lvert\xi_k\rvert^2 \Bigr)^{p/2} 
\biggr),
\]
where $p \geq 2$ and $\xi=(\xi_k)_k$ is any (finite) sequence of
independent real random variables. In particular, several extensions
are obtained in cases where the independent random variables $\xi_k$
take values in Banach spaces satisfying certain geometric properties;
further extensions are proved, by duality arguments, to the case where
$p \in ]{1,2}[$. Particularly ``nice'' versions are then derived
assuming that the random variables $\xi_k$ take values in $L_q$
spaces, thanks to their rich geometric structure. Finally, it is shown
that, using decoupling techniques, such inequalities can be extended
from sequences of independent random variables to stochastic integrals
of step processes with respect to compensated Poisson random measures.


\section{Inequalities for stochastic convolutions}
\label{sec:conv}
In this section we show how one can extend, under certain assumptions,
maximal inequalities from stochastic integrals to stochastic
convolutions using dilations of semigroups. As is well known,
stochastic convolutions are in general not semimartingales, hence
establishing maximal inequalities for them is, in general, not an easy
task. Usually one tries to approximate stochastic convolutions by
processes which can be written as solutions to stochastic differential
equations in either a Hilbert or a Banach space, for which one can
(try to) obtain estimates using tools of stochastic calculus. As a
final step, one tries to show that such estimates can be transfered to
stochastic convolutions as well, based on establishing suitable
convergence properties. At present it does not seem possible to claim
that any of the two methods is superior to the other (cf.,
e.g., the discussion in \cite{VerWei}). We choose to concentrate on
the dilation technique for its simplicity and elegance.

\medskip

We shall say that a linear operator $A$ on a Banach space $E$, such
that $-A$ is the infinitesimal generator of a strongly continuous
semigroup $S$, is of class $D$ if there exist a Banach space
$\bar{E}$, an isomorphic embedding $\iota:E \to \bar{E}$, a projection
$\pi:\bar{E} \to \iota(E)$, and a strongly continuous bounded group
$(U(t))_{t\in\erre}$ on $\bar{E}$ such that the following diagram
commutes for all $t>0$:
\[
\begin{CD}
  E @>S(t)>> E\\
@V{\iota}VV @AA{\iota^{-1}\circ\pi}A\\
  \bar{E} @>>U(t)> \bar{E}
\end{CD}
\]
As far as we know there is no general characterization of operators of
class $D$.\footnote{The definition of class $D$ is not standard and it
  is introduced just for the sake of concision.} Several sufficient
conditions, however, are known.

We begin with the classical dilation theorem by Sz.-Nagy (see
e.g. \cite{SzNagy-Foias}).
\begin{prop}
  Let $A$ be a linear operator $m$-accretive operator on a Hilbert
  space $H$. Then $A$ is of class $D$.
\end{prop}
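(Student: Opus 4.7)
The plan is to reduce the statement to the classical Sz.-Nagy unitary dilation theorem, which is the tool the citation \cite{SzNagy-Foias} points to. The reduction itself is short; essentially all the work is hidden in that theorem.

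First, I would observe that $m$-accretivity of $A$ is precisely the hypothesis of the Lumer--Phillips theorem, so $-A$ generates a strongly continuous semigroup of contractions $(S(t))_{t\geq 0}$ on $H$, with $\norm{S(t)} \leq 1$ for every $t\geq 0$. This puts us in the exact setting of Sz.-Nagy's theorem.

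Next, I would invoke the Sz.-Nagy--Foia\c{s} dilation theorem for strongly continuous contraction semigroups on a Hilbert space: there exist a Hilbert space $\bar{H}$, an isometric embedding $\iota: H \hookrightarrow \bar{H}$, and a strongly continuous one-parameter unitary group $(U(t))_{t\in\erre}$ on $\bar{H}$ such that
\[
\iota\,S(t) = P_{\iota(H)}\,U(t)\,\iota \qquad \forall t \geq 0,
\]
where $P_{\iota(H)}$ is the orthogonal projection of $\bar{H}$ onto the closed subspace $\iota(H)$. Setting $\pi := P_{\iota(H)}$ and applying $\iota^{-1}$ (well defined on $\iota(H)$) recovers exactly the commutative diagram in the definition of class $D$. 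Unitarity of each $U(t)$ gives $\norm{U(t)} = 1$ for every $t\in\erre$, so the group is bounded, and the conclusion follows.

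The real obstacle, honestly, sits inside Sz.-Nagy's theorem rather than in this reduction, but since that theorem is classical it can be invoked as a black box. If a self-contained proof were required, I would construct $\bar{H}$ as the reproducing-kernel Hilbert space associated with the $\mathcal{L}(H)$-valued positive-definite kernel $K(s,t) := S(t-s)$ for $t\geq s$ and $K(s,t) := S(s-t)^*$ otherwise; define $U(t)$ by translation in the time variable on kernel sections; and verify strong continuity of $U$ on the dense span of such sections by a standard $3\varepsilon$ argument, using strong continuity of both $S$ and its adjoint semigroup (the latter automatic on a Hilbert space).
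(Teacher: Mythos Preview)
Your proposal is correct and matches the paper's approach exactly: the paper gives no explicit proof, simply presenting the proposition as a restatement of the classical Sz.-Nagy dilation theorem (with the reference \cite{SzNagy-Foias}). Your write-up just makes explicit the two steps the paper leaves implicit---Lumer--Phillips to get a contraction semigroup, then Sz.-Nagy's unitary dilation to produce $\bar H$, $\iota$, $\pi$, and $U$---so there is nothing to compare.
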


The next result, due to Fendler \cite{Fendler}, is analogous to
Sz.-Nagy's dilation theorem in the context of $L_q$ spaces, although
it requires an extra positivity assumption.
\begin{prop}
  Let $E=L_q(X)$, where $X$ is any measure space and
  $q\in\left]1,\infty\right[$. Assume that $A$ is a linear
  $m$-accretive operator on $E$ such that $S(t):=e^{-tA}$ is
  positivity preserving for all $t>0$. Then $A$ is of class $D$, with
  $\bar{E}=L_q(Y)$, where $Y$ is another measure space.
\end{prop}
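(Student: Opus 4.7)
My plan is to reduce the statement to Akcoglu's dilation theorem for a single positive contraction on an $L_q$-space, and then upgrade the resulting one-operator dilation to a strongly continuous group dilating the whole semigroup simultaneously. From $m$-accretivity of $A$, the semigroup $S(t)=e^{-tA}$ consists of contractions on $E=L_q(X)$, and the positivity hypothesis makes each $S(t)$ a positive contraction. Akcoglu's theorem then produces, for any positive contraction $T$ on $L_q$ with $q\in]1,\infty[$, a positive isometric embedding $\iota:E \hookrightarrow L_q(Y_0)$, a positive contractive projection $\pi:L_q(Y_0) \to \iota(E)$, and a positive invertible isometry $V$ on $L_q(Y_0)$ that dilates all nonnegative integer powers of $T$, in the sense that $T^n=\iota^{-1}\pi V^n\iota$ for every $n\geq 0$.

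To construct a simultaneous dilation of the continuous family $(S(t))_{t\geq 0}$, I would first dilate the dyadic subsemigroups $(S(k2^{-n}))_{k\in\enne}$ for each $n\in\enne$, arranging the embeddings $\iota_n$, the projections $\pi_n$, and the invertible isometries $V_n$ compatibly under dyadic refinement. An inductive limit construction on the resulting tower of $L_q$-spaces should produce a single $L_q(Y)$ on which one has positive invertible isometries $U(t)$ for every dyadic $t\geq 0$, satisfying the group law and intertwining the semigroup via the commuting diagram. The extension to arbitrary $t\in\erre_+$ is obtained by continuity, and then setting $U(-t):=U(t)^{-1}$ yields a group on all of $\erre$. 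Strong continuity of $(U(t))_{t\in\erre}$ on $\bar{E}=L_q(Y)$ then follows from strong continuity of $S$ on the dense set of dyadic parameters combined with the uniform bound on $\norm{U(t)}$.

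The main obstacle is precisely the simultaneity step. Akcoglu's theorem is a statement about one contraction at a time, and making the dilations of the individual operators $S(t)$ cohere across the continuous parameter $t$, without destroying either the group structure on the dilation space or strong continuity, is the substantial technical content of Fendler's paper. A secondary delicate point is ensuring that the limit measure space $Y$ is again a bona fide measure space (rather than merely an abstract measure algebra) so that the target is actually an $L_q(Y)$ in the standard sense. Once the group $(U(t))_{t\in\erre}$ has been constructed on $\bar{E}=L_q(Y)$ with the required intertwining, the commuting diagram defining class $D$ holds by construction.
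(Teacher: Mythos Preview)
The paper does not prove this proposition at all: it is stated as a known result, attributed to Fendler and supported by a citation, with no argument given. So there is no ``paper's own proof'' to compare against.

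Your sketch is a reasonable outline of the strategy that underlies Fendler's theorem, and you are candid that the hard part --- upgrading Akcoglu's single-operator dilation to a coherent, strongly continuous group dilation of the whole semigroup --- is precisely the technical content of that paper. As written, however, your proposal is not a proof but a roadmap with the key step deferred to the reference: the inductive-limit/compatibility construction you describe (``arranging the embeddings $\iota_n$, the projections $\pi_n$, and the invertible isometries $V_n$ compatibly under dyadic refinement'') is exactly where the difficulty lies, and nothing in Akcoglu's theorem guarantees that the dilations at different dyadic scales can be made to nest. If the intent is merely to indicate why the result is plausible and to cite Fendler for the details, that matches what the paper does; if the intent is to give an independent proof, the simultaneity step would need to be carried out in full.
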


The following very recent result, due to Fr\"ohlich and Weis
\cite{FrWei}, allows one to consider classes of operators that are not
necessarily accretive (for many interesting examples, see
e.g.~\cite{VerWei}). For all unexplained notions of functional
calculus for operators we refer to, e.g., \cite{Weis:surv}.
\begin{prop}
  Let $E=L_q(X,m)$, with $q\in\left]1,\infty\right[$, and assume
  that $A$ is sectorial and admits a bounded $H^\infty$-calculus with
  $\omega_{H^\infty}(A)<\pi/2$. Then $A$ is of class $D$, and one can
  choose $\bar{E}=L_q([0,1]\times
  X,\mathrm{Leb}\otimes m)$.
\end{prop}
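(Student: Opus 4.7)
The plan is to exploit the bounded $H^\infty$-calculus to produce a bounded $C_0$-group of imaginary powers of $A$ on $E$, dilate this group to $\bar E$ by a Banach-space dilation theorem, and then reconstruct the dilation of the semigroup $S$ via a Mellin-type representation.

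First, since $\omega_{H^\infty}(A) < \pi/2$, McIntosh's functional calculus theorem applied to the bounded holomorphic functions $z \mapsto z^{is}$, $s \in \erre$, produces imaginary powers $A^{is}$ as bounded operators on $E = L_q(X,m)$. They form a strongly continuous group with growth bound $\norm{A^{is}} \lesssim_\theta e^{\theta \lvert s \rvert}$ for every $\theta > \omega_{H^\infty}(A)$, so in particular the group has type strictly less than $\pi/2$.

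Second, I invoke a dilation theorem for bounded $C_0$-groups on $L_q$-spaces, yielding an isometric embedding $\iota: E \hookrightarrow \bar E := L_q([0,1]\times X,\mathrm{Leb}\otimes m)$, a projection $\pi: \bar E \to \iota(E)$, and a bounded $C_0$-group $(V(s))_{s\in\erre}$ on $\bar E$ such that $A^{is} = \iota^{-1}\pi V(s)\iota$ for all $s \in \erre$. This plays the role of Sz.-Nagy's theorem in the Hilbert case and of Fendler's theorem in the positive case; on a general $L_q$ it rests on transference-type arguments exploiting the UMD property of $L_q$ together with the $R$-boundedness of $(A^{is})$ inherited from the $H^\infty$-calculus.

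Third, using the Mellin-inversion identity
\[
e^{-tA} \;=\; \frac{1}{2\pi i}\int_{c-i\infty}^{c+i\infty} t^{-w}\,\Gamma(w)\,A^{-w}\,dw \qquad (t>0,\ c>0),
\]
which is valid via the $H^\infty$-calculus, I define the analogous operator family on $\bar E$ by replacing $A^{-w}$ with the analytic continuation of $V$ (legitimate since the group type is strictly less than $\pi/2$, so $V$ is holomorphic in a horizontal strip wider than needed). The resulting bounded analytic semigroup $U(t)$ on $\bar E$ satisfies $\iota^{-1}\pi U(t)\iota = S(t)$ for $t>0$, and the group structure of $V$ extends $U$ to a bounded $C_0$-group on all of $\erre$, as required.

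The principal obstacle is the second step: constructing a genuine group dilation on $L_q$ without a Hilbert inner product or positivity forces one to use Banach-space-geometric tools — the UMD property of $L_q$ and the $R$-boundedness of $(A^{is})$, both following from the $H^\infty$-hypothesis — rather than the operator-theoretic devices of Sz.-Nagy or Fendler. A secondary technical point is the careful justification of the Mellin contour manipulations in the third step, in particular checking that the integral commutes with $\iota$ and $\pi$ so that the dilation property really passes from the imaginary-power group to the semigroup itself.
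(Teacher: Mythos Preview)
The paper does not supply a proof of this proposition; it is quoted as a result of Fr\"ohlich and Weis and referenced to \cite{FrWei}, so there is no in-paper argument to compare against.

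On your sketch itself: the overall strategy---pass from the $H^\infty$-calculus to imaginary powers, dilate, then recover the semigroup by a Mellin-type formula---has the right flavour, but two steps are not justified as written. First, $(A^{is})_{s\in\erre}$ is \emph{not} a bounded group: you yourself record $\norm{A^{is}}\lesssim_\theta e^{\theta|s|}$ with $\theta>\omega_{H^\infty}(A)>0$, so invoking ``a dilation theorem for bounded $C_0$-groups on $L_q$'' in step~2 is internally inconsistent and would in any case require you to exhibit such a theorem (none of the classical $L_q$-dilation results---Sz.-Nagy, Akcoglu--Sucheston, Fendler---covers general unbounded or non-positive groups). Second, and more seriously, step~3 does not explain why the Mellin integral defines a bounded \emph{group} on $\bar E$ rather than merely an analytic semigroup; the sentence ``the group structure of $V$ extends $U$ to a bounded $C_0$-group on all of $\erre$'' is an assertion, not an argument, and it is precisely the hard point. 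The actual construction in \cite{FrWei} does not proceed through a prior dilation of the imaginary-power group: it builds the group $U$ on $\bar E$ directly, using square-function estimates (which are equivalent to the $H^\infty$-calculus) to define an isomorphic embedding $\iota$ into a function space on which a natural shift-type group acts and intertwines with $S$. You have correctly located UMD and $R$-boundedness as the relevant Banach-space ingredients, but the architecture of the argument is different from what you propose.
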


Let us now show how certain maximal estimates for stochastic integrals
yield maximal estimates for convolutions involving the semigroup
generated by an operator of class $D$.  Note that, since the operator
norms of $\pi$ and $U(t)$ are less than or equal to one, one has
\begin{align}
  &\E \sup_{t\geq 0} \norm[\bigg]{%
    \int_0^t\!\!\!\int_Z S(t-s) g(s,z)\,\m(ds,dz)}^p_E\nonumber\\
  &\qquad =
  \E \sup_{t\geq 0} \norm[\bigg]{%
  \pi U(t) \int_0^t\!\!\!\int_Z U(-s) \iota(g(s,z))\,\m(ds,dz)}^p_{\bar{E}}%
  \nonumber\\
  &\qquad \leq \norm{\pi}_\infty^p \; \sup_{t\geq 0} \norm{U(t)}_\infty^p \;
  \E \sup_{t\geq 0} \norm[\bigg]{%
  \int_0^t\!\!\!\int_Z U(-s) \iota(g(s,z))\,\m(ds,dz)}^p_{\bar{E}}%
  \nonumber\\
  &\qquad \leq \E \sup_{t\geq 0} \norm[\bigg]{%
  \int_0^t\!\!\!\int_Z U(-s) \iota(g(s,z))\,\bar\mu(ds,dz)}^p_{\bar{E}},
  \label{eq:sih}
\end{align}
where $\norm{\cdot}_\infty$ denotes the operator norm.
We have thus reduced the problem to finding a maximal estimate for a
stochastic integral, although involving a different integrand and on a
larger space.

If $E$ is a Hilbert space we can proceed rather easily.
\begin{prop}
  Let $A$ be of class $D$ on a Hilbert space $E$. Then one has, for
  any $\alpha \in [1,2]$,
  \[
  \E \sup_{t  \geq 0} \norm[\bigg]{%
    \int_0^t S(t-\cdot) g\,d\m}^p_E \lesssim_{\alpha,p}
  \begin{cases}
  \displaystyle
  \E\biggl( \int \norm{g}^\alpha\,d\nu \biggr)^{p/\alpha}
  &\quad \forall p \in \left]0,\alpha\right],\\
  \displaystyle
  \E\biggl( \int \norm{g}^\alpha\,d\nu \biggr)^{p/\alpha} +
  \E \int \norm{g}^p\,d\nu
  &\quad \forall p \in \left[\alpha,\infty\right[.
  \end{cases}
  \]
\end{prop}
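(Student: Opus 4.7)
The plan is to carry out the dilation reduction already set up in display \eqref{eq:sih} and then apply Theorem \ref{thm:bj} on the larger space $\bar{E}$. Since $E$ is assumed Hilbert and $A$ is of class $D$ (so, by Sz.-Nagy, we may take $\bar{E}$ to be a Hilbert space as well), the unitary-like group $U$ and the isomorphic embedding $\iota$ provide the bridge from the stochastic convolution on $E$ to an honest $\bar{E}$-valued stochastic integral against $\bar\mu$, to which the purely-discontinuous maximal inequality already proved applies.

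First I would fix $\alpha \in [1,2]$ and $p>0$, and start from the chain of inequalities culminating in \eqref{eq:sih}, which yields
\[
\E \sup_{t\geq 0} \norm[\Big]{\int_0^t S(t-\cdot)g\,d\m}_E^p
\leq \E \sup_{t\geq 0} \norm[\Big]{\int_0^t\!\!\int_Z U(-s)\iota(g(s,z))\,\m(ds,dz)}_{\bar E}^p.
\]
Setting $\bar g(s,z) := U(-s)\iota(g(s,z))$, one checks that $\bar g$ is predictable and $\bar{E}$-valued, and that the $\bar E$-valued martingale $\bar M := \bar g \star \m$ is well defined: indeed $\bar g$ is obtained from $g$ by composition with the bounded, strongly continuous family $U(-s)\iota$, so integrability with respect to $\m$ in $\bar E$ follows from that of $g$ in $E$.

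Next I would apply Theorem \ref{thm:bj} to $\bar M$ in the Hilbert space $\bar{E}$. This gives
\[
\E (\bar M^*_\infty)^p \lesssim_{\alpha,p}
\E\Bigl(\int \norm{\bar g}_{\bar E}^\alpha\,d\nu\Bigr)^{p/\alpha}
\qquad \text{or} \qquad
\E\Bigl(\int \norm{\bar g}_{\bar E}^\alpha\,d\nu\Bigr)^{p/\alpha} + \E\int \norm{\bar g}_{\bar E}^p\,d\nu,
\]
according to whether $p \leq \alpha$ or $p \geq \alpha$. The final step is pointwise domination of the integrand: since $\iota$ is an isomorphic embedding and $U$ is a bounded group, one has
\[
\norm{\bar g(s,z)}_{\bar E} = \norm{U(-s)\iota(g(s,z))}_{\bar E}
\leq \sup_{t\in\erre}\norm{U(t)}_\infty \, \norm{\iota}_\infty \, \norm{g(s,z)}_E
\lesssim \norm{g(s,z)}_E,
\]
so the $\bar E$-norms on the right-hand side are controlled by the $E$-norms of $g$, and combining this with the preceding display yields the claim.

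The only genuine obstacle is ensuring that the ambient space $\bar E$ in the dilation can be taken Hilbert, so that Theorem \ref{thm:bj} is directly available; this is exactly what Sz.-Nagy's theorem provides for $m$-accretive $A$, and more generally one reads it off from the definition of class $D$ when $E$ is Hilbert. All other ingredients—measurability of $\bar g$, well-posedness of the $\bar E$-valued stochastic integral, and the norm estimate above—are routine and follow from the strong continuity and uniform boundedness of $U$ together with the standing hypotheses on $g$ that make the original convolution well defined.
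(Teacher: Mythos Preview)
Your proposal is correct and follows essentially the same route as the paper: reduce via \eqref{eq:sih} to an $\bar E$-valued stochastic integral, apply Theorem~\ref{thm:bj} there, and then bound $\norm{U(-s)\iota(g)}_{\bar E}$ by $\norm{g}_E$. The only minor difference is that the paper invokes the Sz.-Nagy dilation directly (taking $U$ unitary and $\iota$ isometric, so the pointwise bound is an equality), whereas you use the general class~$D$ bounds $\sup_t\norm{U(t)}_\infty$ and $\norm{\iota}_\infty$; both lead to the same conclusion, and you are right to flag that one needs $\bar E$ Hilbert for Theorem~\ref{thm:bj} to apply.
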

\begin{proof}
  We consider only the case $p>\alpha$, as the other one is actually
  simpler. The estimate $\mathsf{BJ}_{\alpha,p}$ and \eqref{eq:sih}
  yield
  \begin{align*}
  &\E \sup_{t\geq 0} \norm[\bigg]{%
    \int_0^t S(t-\cdot) g\,d\m}_E^p\\
  &\qquad \lesssim_{\alpha,p}
  \E \int \norm[\big]{U(-\cdot) \iota \circ g}_{\bar{E}}^p \,d\nu
  + \E \biggl( \int \norm[\big]{U(-\cdot) \iota \circ g}_{\bar{E}}^\alpha \,d\nu
       \biggr)^{p/\alpha}\\
  &\qquad \leq
  \E \int \norm{g}_{E}^p \,d\nu
  + \E \biggl( \int \norm{g}_{E}^\alpha \,d\nu \biggr)^{p/\alpha},
  \end{align*}
  because $U$ is a unitary group and the embedding $\iota$ is
  isometric.
\end{proof}

If $E=L_q(X)$, the transposition of maximal inequalities from
stochastic integrals to stochastic convolution is not so
straightforward. In particular, \eqref{eq:sih} implies that the
corresponding upper bounds will be functions of the norms of
$U(-\,\cdot)\, \iota \circ g$ in three spaces of the type $L_{p,p,q}$,
$L_{p,q,q}$ and $\tilde{L}_{p,q}$ (with $X$ replaced by a different
measure space $Y$, so that $\bar{E}=L_q(Y)$). In analogy to the
previous proposition, it is not difficult to see that
\begin{equation}     \label{eq:ququ}
\norm[\big]{U(-\,\cdot)\, \iota \circ g}_{\L_{p_1}L_{p_2}(\erre_+\times Z \to L_{p_3}(Y))}
\leq \norm[\big]{g}_{\L_{p_1}L_{p_2}(\erre_+\times Z \to L_{p_3}(X))}.
\end{equation}
However, estimating the norm of $U(-\,\cdot)\, \iota \circ g$ in
$\tilde{L}_{p,q}(Y)$ in terms of the norm of $g$ in $\tilde{L}_{p,q}$
does not seem to be possible without further assumptions. Nonetheless,
the following sub-optimal estimates can be obtained.
\begin{prop}
  Let $A$ be of class $D$ on $E=L_q:=L_q(X)$ and $\mu$ a Poisson random
  measure. Then one has
  \[
  \E \sup_{t  \geq 0} \norm[\bigg]{%
    \int_0^t S(t-\cdot) g\,d\m}^p_{L_q} \lesssim_{p,q}
  \norm{g}_{\mathcal{J}_{p,q}},
  \]
where
  \[
  \mathcal{J}_{p,q} :=
  \begin{cases}
  L_{p,p,q} + L_{p,q,q}, &\quad 1 < p \leq q \leq 2,\\
  L_{p,p,q} \cap L_{p,q,q}, &\quad 1 < q \leq p \leq 2,\\
  L_{p,p,q} \cap L_{p,q,q}, &\quad 1 < q < 2 \leq p,\\
  L_{p,p,q} + (L_{p,q,q} \cap L_{p,2,q}), &\quad 1 < p < 2 \leq q,\\
  (L_{p,p,q} + L_{p,q,q}) \cap L_{p,2,q}, &\quad 2 \leq p \leq q,\\
  L_{p,p,q} \cap L_{p,q,q} \cap L_{p,2,q}, &\quad 2 \leq q \leq p.
  \end{cases}
  \]
\end{prop}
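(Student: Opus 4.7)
The plan is to combine the dilation estimate \eqref{eq:sih}, which reduces the stochastic convolution by $S$ to a stochastic integral of $U(-\cdot)\iota\circ g$ on the larger space $\bar E = L_q(Y)$, with Dirksen's two-sided estimate from Section~\ref{sec:lq}. Applied in the ambient space $L_q(Y)$, the latter yields
\[
\E\sup_{t\geq 0}\norm[\bigg]{\int_0^t\!\!\int_Z U(-s)\iota(g(s,z))\,\m(ds,dz)}_{L_q(Y)}^p
\lesssim_{p,q} \norm[\big]{U(-\cdot)\iota\circ g}^p_{\mathcal{I}_{p,q}(Y)},
\]
where $\mathcal{I}_{p,q}(Y)$ is defined as in \eqref{eq:ipq} with $X$ replaced by $Y$. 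It thus suffices to control $\norm[\big]{U(-\cdot)\iota\circ g}_{\mathcal{I}_{p,q}(Y)}$ by $\norm{g}_{\mathcal{J}_{p,q}}$.

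For every three-parameter factor $L_{p,p_2,q}(Y)$ with $p_2\in\{p,q,2\}$ appearing in $\mathcal{I}_{p,q}(Y)$ or in $\mathcal{J}_{p,q}$, inequality \eqref{eq:ququ} gives directly
\[
\norm[\big]{U(-\cdot)\iota\circ g}_{L_{p,p_2,q}(Y)} \leq \norm{g}_{L_{p,p_2,q}}.
\]
The genuine difficulty, and the source of the sub-optimality encoded by $\mathcal{J}_{p,q}$, is the term $\tilde L_{p,q}(Y)=\L_p(L_q(Y\to L_2(\erre_+\times Z)))$: its $L_q(Y)$-norm sits outside the $L_2$-norm in the spatial variable, so the action of $U(-s)$ on $\iota g$ is not controllable pointwise in $y\in Y$, and \eqref{eq:ququ} does not apply to it. The argument splits along the position of $q$ relative to $2$. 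When $q\geq 2$, Minkowski's integral inequality gives $\norm[\big]{h}_{\tilde L_{p,q}(Y)}\leq\norm[\big]{h}_{L_{p,2,q}(Y)}$ for any $h$, after which \eqref{eq:ququ} applied to $L_{p,2,q}(Y)$ produces the substitution of $\tilde L_{p,q}$ by $L_{p,2,q}$ visible in $\mathcal{J}_{p,q}$. When $q<2$ the analogous Minkowski inequality points the wrong way, but in each of the three $q\leq 2$ cases the term $\tilde L_{p,q}$ enters $\mathcal{I}_{p,q}$ only as a summand, so the infimum defining the corresponding sum norm is not increased by assigning zero mass to that summand; hence the $\tilde L_{p,q}(Y)$ piece may simply be dropped, leaving only three-parameter factors to which \eqref{eq:ququ} applies.

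The six cases listed in \eqref{eq:ipq} are then assembled by routine bookkeeping: every admissible decomposition of $g$ realizing the $\mathcal{J}_{p,q}$-norm induces, via $g\mapsto U(-\cdot)\iota\circ g$, an admissible decomposition of the dilated integrand in $\mathcal{I}_{p,q}(Y)$ with the requisite norm control, while intersection norms are handled componentwise. The main obstacle throughout is precisely the $\tilde L_{p,q}$ factor: the two workarounds above are what prevent recovery of Dirksen's sharp bound in the convolution setting and force the sub-optimal replacement or deletion appearing in the definition of $\mathcal{J}_{p,q}$.
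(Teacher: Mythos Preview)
Your argument is correct and follows essentially the same route as the paper: reduce via the dilation estimate \eqref{eq:sih} to Dirksen's bound on $L_q(Y)$, then handle the problematic $\tilde L_{p,q}$ factor by dropping it when $q<2$ (it always enters as a summand) and by the Minkowski inequality $\norm{\cdot}_{\tilde L_{p,q}}\leq\norm{\cdot}_{L_{p,2,q}}$ when $q\geq 2$, after which \eqref{eq:ququ} transfers all remaining three-parameter norms back to $X$. Your presentation is slightly more explicit about the sum/intersection bookkeeping than the paper's, but the substance is the same.
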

\begin{proof}
  Note that, if $q<2$, one has, by definition,
  $\norm{\cdot}_{\mathcal{I}_{p,q}} \leq
  \norm{\cdot}_{\mathcal{J}{p,q}}$ (where the spaces $\mathcal{I}_{p,q}$ have been defined in \eqref{eq:ipq}); if $q \geq 2$, by Minkowski's
  inequality,
  \[
  \norm[\bigg]{\biggl( \int |g|^2\,d\nu \biggr)^{1/2}}_{L_q} =
  \norm[\bigg]{\int |g|^2\,d\nu}^{1/2}_{L_{q/2}} \leq \biggl( \int
  \norm{g}^2_{L_q}\,d\nu \biggr)^{1/2},
  \]
  that is, $\norm{\cdot}_{\tilde{L}_{p,q}} \leq
  \norm{\cdot}_{L_{p,2,q}}$. This implies
  $\norm{\cdot}_{\mathcal{I}_{p,q}} \leq
  \norm{\cdot}_{\mathcal{J}{p,q}}$ for all $q \geq 2$, hence for all
  (admissible) values of $p$ and $q$. Therefore \eqref{eq:sih} and the
  maximal estimate \eqref{eq:ipq} yield the desired result.
\end{proof}

\begin{rmk}
  The above maximal inequalities for stochastic convolutions continue
  to hold if $A$ is only quasi-$m$-accretive and $g$ has compact
  support in time. In this case the inequality sign $\lesssim_{p,q}$
  has to be replaced by $\lesssim_{p,q,\eta,T}$, where $T$ is a finite
  time horizon. One simply has to repeat the same arguments using the
  $m$-accretive operator $A+\eta I$, for some $\eta>0$.
\end{rmk}


\section{Historical and bibliographical remarks}
\label{sec:storia}
In this section we try to reconstruct, at least in part, the
historical developments around the maximal inequalities presentend
above. Before doing that, however, let us briefly explain how we
became interested in the class of maximal inequalities: the
first-named author used in \cite{cm:MF10} a Hilbert-space version of a
maximal inequality in \cite{ProTal-Euler} to prove well-posedness for
a L\'evy-driven SPDE arising in the modeling of the term structure of
interest rates. The second-named author pointed out that such an
inequality, possibly adapted to the more general case of integrals
with respect to compensated Poisson random measures (rather than with
respect to L\'evy processes), was needed to solve a problem he was
interested in, namely to establish regularity of solutions to SPDEs
with jumps with respect to initial conditions: our joint efforts led
to the results in \cite{cm:JFA10}, where we proved a slightly less
general version of the inequality $\mathsf{BJ}_{2,p}$, $p \geq 2$,
using an argument involving only It\^o's formula. At the time of
writing \cite{cm:JFA10} we did not realize that, as demonstrated in
the present paper, it would have been possible to obtain the same
result adapting one of the two arguments (for L\'evy-driven integrals)
we were aware of, i.e. those in \cite{BJ} and \cite{ProTal-Euler}.

The version in \cite{cm:JFA10} of the inequality $\mathsf{BJ}_{2,p}$,
$p \geq 2$, was called in that paper ``Bichteler-Jacod inequality'',
as we believed it appeared (in dimension one) for the first time in
\cite{BJ}. This is actually what we believed until a few days ago
(this explains the label \textsf{BJ}), when, after this paper as well
as the first drafts of \cite{cm:BJlq} and \cite{cm:SEE2} were
completed, we found a reference to \cite{Nov:75} in
\cite{Wood:corr}. This is one of the surprises we alluded to in the
introduction. Namely, Novikov proved (in 1975, hence well before
Bichteler and Jacod, not to mention how long before ourselves) the
upper bound $\mathsf{BJ}_{\alpha,p}$ for all values of $\alpha$ and
$p$, assuming $H=\erre^n$, but with constants that are independent of
the dimension. For this reason it seems that, if one wants to give a
name (as we do) to the inequality $\mathsf{BJ}$ and its extensions,
they should be called Novikov's inequalities.\footnote{It should be
  mentioned that there are discrete-time real-valued analogs of
  $\mathsf{BJ}_{2,p}$, $p \geq 2$, that go under the name of
  Burkholder-Rosenthal (in alphabetical but reverse chronological
  order: Rosenthal \cite{Rosenthal} proved it for sequences of
  independent random variables in 1970, then Burkholder \cite{Burk:73}
  extended it to discrete-time (real) martingales in 1973), and some
  authors speak of continuous-time Burkholder-Rosenthal
  inequalities. One may then also propose to use the expression
  Burkholder-Rosenthal-Novikov inequality, that, however, seems too
  long.}
Unfortunately Novikov's paper \cite{Nov:75} was probably not known
also to Kunita, who proved in \cite{Kun:04} (in 2004) a slightly
weaker version of $\mathsf{BJ}_{2,p}$, $p \geq 2$, in $H=\erre^n$,
also using It\^o's formula. Moreover, Applebaum \cite{App2} calls
these inequalities ``Kunita's estimates'', but, again, they are just a
version of what we called (and are going to call) Novikov's
inequality.

Even though the proofs in \cite{BGJ,BJ} are only concerned with the
real-valued case, the authors explicitly say that they knew how to get
the constant independent of the dimension (see, in particular,
\cite[Lemma 5.1 and remark 5.2]{BGJ}). The proofs in
\cite{JKMP,ProTal-Euler} are actually concerned with integrals with
respect to L\'evy processes, but the adaptation to the more general
case presented here is not difficult. Moreover, the inequalities in
\cite{BGJ,BJ,JKMP,ProTal-Euler} are of the type
\[
\E\sup_{t \leq T} \, \norm{(g \star \m)_t}^p 
\lesssim_{p,d,T} \E\int_0^T \biggl( 
\int_Z \norm{g((s,\cdot)}^2\,dm \biggr)^{p/2}ds
+ \E\int_0^T\!\!\!\int_Z \norm{g((s,\cdot)}^p\,d\nu_0\,ds,
\]
where $\mu$ is a Poisson random measure with compensator
$\nu=\mathrm{Leb} \otimes \nu_0$. Our proofs show that all their
arguments can be improved to yield a constant depending only on $p$
and that the first term on the right-hand side can be replaced by
$\E\bigl( \norm{g}^2 \star \nu \bigr)_T^{p/2}$.

Again through \cite{Wood:corr} we also became aware of the
Novikov-like inequality by Dzhaparidze and Valkeila \cite{DzhapValk},
where Theorem \ref{thm:zappa} in proved with $H=\erre$. It should be
observed that the inequality in the latter theorem is apparently more
general than, but actually equivalent to $\mathsf{BJ}$
(cf.~\cite{cm:SEE2}).

Another method to obtain Novikov-type inequalities, also in
vector-valued settings, goes through their analogs in discrete time,
i.e. the Burkholder-Rosenthal inequality. We have not touched upon
this method, as we are rather interested in ``direct'' methods in
continuous time. We refer the interested reader to the very recent
preprints \cite{Dirksen,DMvN}, as well as to \cite{Pine:94,Wood} and
references therein.

The idea of using dilation theorems to extend results from stochastic
integrals to stochastic convolutions has been introduced, to the best
of our knowledge, in \cite{HauSei}. This method has then been
generalized in various directions, see
e.g. \cite{HauSei2,cm:MF10,VerWei}. In this respect, it should be
mentioned that the ``classical'' direct approach, which goes through
approximations by regular processes and avoid dilations (here
``classical'' stands for equations on Hilbert spaces driven by Wiener
process), has been (partially) extended to Banach-space valued
stochastic convolutions with jumps in \cite{BHZ}. The former and the
latter methods are complementary, in the sense that none is more
general than the other. Furthermore, it is well known (see
e.g. \cite{PZ-libro}) that the factorization method breaks down when
applied to stochastic convolutions with respect to jump processes.



\let\oldbibliography\thebibliography
\renewcommand{\thebibliography}[1]{%
  \oldbibliography{#1}%
  \setlength{\itemsep}{-1pt}%
}

\bibliographystyle{amsplain}
\bibliography{ref}

\def\polhk#1{\setbox0=\hbox{#1}{\ooalign{\hidewidth
  \lower1.5ex\hbox{`}\hidewidth\crcr\unhbox0}}}
\providecommand{\bysame}{\leavevmode\hbox to3em{\hrulefill}\thinspace}
\providecommand{\MR}{\relax\ifhmode\unskip\space\fi MR }
\providecommand{\MRhref}[2]{%
  \href{http://www.ams.org/mathscinet-getitem?mr=#1}{#2}
}
\providecommand{\href}[2]{#2}
\begin{thebibliography}{10}

\bibitem{App2}
D.~Applebaum, \emph{L\'evy processes and stochastic calculus}, 2nd ed.,
  Cambridge University Press, Cambridge, 2009. \MR{2512800 (2010m:60002)}

\bibitem{BGJ}
K.~Bichteler, J.-B. Gravereaux, and J.~Jacod, \emph{Malliavin calculus for
  processes with jumps}, Gordon and Breach Science Publishers, New York, 1987.
  \MR{MR1008471 (90h:60056)}

\bibitem{BJ}
K.~Bichteler and J.~Jacod, \emph{Calcul de {M}alliavin pour les diffusions avec
  sauts: existence d'une densit\'e dans le cas unidimensionnel}, Seminar on
  probability, {XVII}, Lecture Notes in Math., vol. 986, Springer, Berlin,
  1983, pp.~132--157. \MR{770406 (86f:60070)}

\bibitem{BHZ}
Z.~Brze{\'z}niak, E.~Hausenblas, and J.~Zhu, \emph{Maximal inequality of
  stochastic convolution driven by compensated {P}oisson random measures in
  {B}anach spaces}, arXiv:1005.1600.

\bibitem{Burk:73}
D.~L. Burkholder, \emph{Distribution function inequalities for martingales},
  Ann. Probability \textbf{1} (1973), 19--42. \MR{0365692 (51 \#1944)}

\bibitem{BurGun}
D.~L. Burkholder and R.~F. Gundy, \emph{Extrapolation and interpolation of
  quasi-linear operators on martingales}, Acta Math. \textbf{124} (1970),
  249--304. \MR{0440695 (55 \#13567)}

\bibitem{Dirksen}
S.~Dirksen, \emph{It{\^o} isomorphisms for {$L^p$}-valued {P}oisson stochastic
  integrals}, arXiv:1208.3885.

\bibitem{DMvN}
S.~Dirksen, J.~Maas, and J.~van Neerven, \emph{Poisson stochastic integration
  in {B}anach spaces}, arXiv:1307.7901.

\bibitem{DzhapValk}
K.~Dzhaparidze and E.~Valkeila, \emph{On the {H}ellinger type distances for
  filtered experiments}, Probab. Theory Related Fields \textbf{85} (1990),
  no.~1, 105--117. \MR{1044303 (91d:60102)}

\bibitem{Fendler}
G.~Fendler, \emph{Dilations of one parameter semigroups of positive
  contractions on {$L\sp p$} spaces}, Canad. J. Math. \textbf{49} (1997),
  no.~4, 736--748. \MR{MR1471054 (98i:47035)}

\bibitem{FrWei}
A.~M. Fr{\"o}hlich and L.~Weis, \emph{{$H^\infty$} calculus and dilations},
  Bull. Soc. Math. France \textbf{134} (2006), no.~4, 487--508. \MR{2364942
  (2009a:47091)}

\bibitem{HauSei}
E.~Hausenblas and J.~Seidler, \emph{A note on maximal inequality for stochastic
  convolutions}, Czechoslovak Math. J. \textbf{51(126)} (2001), no.~4,
  785--790. \MR{MR1864042 (2002j:60092)}

\bibitem{HauSei2}
\bysame, \emph{Stochastic convolutions driven by martingales: maximal
  inequalities and exponential integrability}, Stoch. Anal. Appl. \textbf{26}
  (2008), no.~1, 98--119. \MR{2378512 (2009a:60066)}

\bibitem{JKMP}
J.~Jacod, Th.~G. Kurtz, S.~M{\'e}l{\'e}ard, and Ph. Protter, \emph{The
  approximate {E}uler method for {L}\'evy driven stochastic differential
  equations}, Ann. Inst. H. Poincar\'e Probab. Statist. \textbf{41} (2005),
  no.~3, 523--558. \MR{MR2139032 (2005m:60149)}

\bibitem{KPS}
S.~G. Kre{\u\i}n, Yu.~{\=I}. Petun{\={\i}}n, and E.~M. Sem{\"e}nov,
  \emph{Interpolation of linear operators}, Translations of Mathematical
  Monographs, vol.~54, American Mathematical Society, Providence, R.I., 1982.
  \MR{649411 (84j:46103)}

\bibitem{Kun:04}
H.~Kunita, \emph{Stochastic differential equations based on {L}\'evy processes
  and stochastic flows of diffeomorphisms}, Real and stochastic analysis,
  Birkh\"auser Boston, Boston, MA, 2004, pp.~305--373. \MR{2090755
  (2005h:60169)}

\bibitem{Lenglart}
E.~Lenglart, \emph{Relation de domination entre deux processus}, Ann. Inst. H.
  Poincar\'e Sect. B (N.S.) \textbf{13} (1977), no.~2, 171--179. \MR{0471069
  (57 \#10810)}

\bibitem{LeLePr}
E.~Lenglart, D.~L{\'e}pingle, and M.~Pratelli, \emph{Pr\'esentation unifi\'ee
  de certaines in\'egalit\'es de la th\'eorie des martingales}, Seminar on
  {P}robability, {XIV} ({P}aris, 1978/1979), Lecture Notes in Math., vol. 784,
  Springer, Berlin, 1980, pp.~26--52. \MR{580107 (82d:60087)}

\bibitem{cm:BJlq}
C.~Marinelli, \emph{On maximal inequalities for purely discontinuous
  {$L_q$}-valued martingales}, in preparation.

\bibitem{cm:SEE2}
\bysame, \emph{On regular dependence on parameters of stochastic evolution
  equations}, in preparation.

\bibitem{cm:MF10}
\bysame, \emph{Local well-posedness of {M}usiela's {SPDE} with {L}\'evy noise},
  Math. Finance \textbf{20} (2010), no.~3, 341--363. \MR{2667893}

\bibitem{cm:JFA13}
\bysame, \emph{Approximation and convergence of solutions to semilinear
  stochastic evolution equations with jumps}, J. Funct. Anal. \textbf{264}
  (2013), no.~12, 2784--2816. \MR{3045642}

\bibitem{cm:JFA10}
C.~Marinelli, C.~Pr{\'e}v{\^o}t, and M.~R{\"o}ckner, \emph{Regular dependence
  on initial data for stochastic evolution equations with multiplicative
  {P}oisson noise}, J. Funct. Anal. \textbf{258} (2010), no.~2, 616--649.
  \MR{MR2557949}

\bibitem{cm:BDG}
C.~Marinelli and M.~R{\"o}ckner, \emph{On the maximal inequalities of
  {B}urkholder, {D}avis and {G}undy}, arXiv preprint (2013).

\bibitem{cm:IDAQP10}
\bysame, \emph{On uniqueness of mild solutions for dissipative stochastic
  evolution equations}, Infin. Dimens. Anal. Quantum Probab. Relat. Top.
  \textbf{13} (2010), no.~3, 363--376. \MR{2729590 (2011k:60220)}

\bibitem{cm:EJP10}
\bysame, \emph{Well-posedness and asymptotic behavior for stochastic
  reaction-diffusion equations with multiplicative {P}oisson noise}, Electron.
  J. Probab. \textbf{15} (2010), no. 49, 1528--1555. \MR{2727320}

\bibitem{Met}
M.~M{\'e}tivier, \emph{Semimartingales}, Walter de Gruyter \& Co., Berlin,
  1982. \MR{MR688144 (84i:60002)}

\bibitem{Mey:dual}
P.~A. Meyer, \emph{Le dual de {$H^{1}$} est {BMO} (cas continu)}, S\'eminaire
  de {P}robabilit\'es, {VII} ({U}niv. {S}trasbourg), Springer, Berlin, 1973,
  pp.~136--145. Lecture Notes in Math., Vol. 321. \MR{0410910 (53 \#14652a)}

\bibitem{Nov:75}
A.~A. Novikov, \emph{Discontinuous martingales}, Teor. Verojatnost. i Primemen.
  \textbf{20} (1975), 13--28. \MR{0394861 (52 \#15660)}

\bibitem{PZ-libro}
Sz. Peszat and J.~Zabczyk, \emph{Stochastic partial differential equations with
  {L}\'evy noise}, Cambridge University Press, Cambridge, 2007. \MR{MR2356959}

\bibitem{Pine:94}
Io. Pinelis, \emph{Optimum bounds for the distributions of martingales in
  {B}anach spaces}, Ann. Probab. \textbf{22} (1994), no.~4, 1679--1706.
  \MR{1331198 (96b:60010)}

\bibitem{Knoche-diss}
C.~Pr{\'e}v{\^o}t~(Knoche), \emph{Mild solutions of {SPDE}'s driven by
  {P}oisson noise in infinite dimensions and their dependence on initial
  conditions}, Ph.D. thesis, Universit\"at Bielefeld, 2005.

\bibitem{ProTal-Euler}
Ph. Protter and D.~Talay, \emph{The {E}uler scheme for {L}\'evy driven
  stochastic differential equations}, Ann. Probab. \textbf{25} (1997), no.~1,
  393--423. \MR{MR1428514 (98c:60063)}

\bibitem{Rosenthal}
H.~P. Rosenthal, \emph{On the subspaces of {$L^{p}$} {$(p>2)$} spanned by
  sequences of independent random variables}, Israel J. Math. \textbf{8}
  (1970), 273--303. \MR{0271721 (42 \#6602)}

\bibitem{Stein-LP}
E.~M. Stein, \emph{Topics in harmonic analysis related to the
  {L}ittlewood-{P}aley theory.}, Princeton University Press, Princeton, N.J.,
  1970. \MR{0252961 (40 \#6176)}

\bibitem{SzNagy-Foias}
B.~Sz.-Nagy, C.~Foias, H.~Bercovici, and L.~K{\'e}rchy, \emph{Harmonic analysis
  of operators on {H}ilbert space}, second ed., Springer, New York, 2010.
  \MR{2760647 (2012b:47001)}

\bibitem{VerWei}
M.~Veraar and L.~Weis, \emph{A note on maximal estimates for stochastic
  convolutions}, Czechoslovak Math. J. \textbf{61(136)} (2011), no.~3,
  743--758. \MR{2853088}

\bibitem{Weis:surv}
L.~Weis, \emph{The {$H^\infty$} holomorphic functional calculus for sectorial
  operators---a survey}, Partial differential equations and functional
  analysis, Birkh\"auser, Basel, 2006, pp.~263--294. \MR{2240065 (2007c:47018)}

\bibitem{Wood}
A.~T.~A. Wood, \emph{Rosenthal's inequality for point process martingales},
  Stochastic Process. Appl. \textbf{81} (1999), no.~2, 231--246. \MR{1694561
  (2000f:60073)}

\bibitem{Wood:corr}
\bysame, \emph{Acknowledgement of priority. {C}omment on: ``{R}osenthal's
  inequality for point process martingales'' [{S}tochastic {P}rocess.\ {A}ppl.\
  {\bf 81} (1999), no.\ 2, 231--246; {MR}1694561 (2000f:60073)]}, Stochastic
  Process. Appl. \textbf{93} (2001), no.~2, 349. \MR{1828780}

\end{thebibliography}

\end{document}